\newtheorem{theorem}{Theorem}[section]
\newtheorem{prop}[theorem]{Proposition}
\newtheorem{lemma}[theorem]{Lemma}
\newtheorem{corol}[theorem]{Corollary}
\newtheorem{notation}[theorem]{Notation}
\newtheorem{problem}[theorem]{Problem}
\theoremstyle{definition}
\newtheorem{definition}[theorem]{Definition}
\newtheorem{remark}[theorem]{\textup{Remark}} 
\newtheorem{example}[theorem]{\textit{Example}} 
\numberwithin{equation}{section}
\begin{document}

\title[On Quasi-Small Loop Groups]%
{On Quasi-Small Loop Groups}
\author[B.~Mashayekhy, H.~Mirebrahimi, H.~Torabi \and A.~Babaee]%
{Behrooz~Mashayekhy{$^1$}, Hanieh~Mirebrahimi{$^2$}, Hamid~Torabi{$^3$} \and Ameneh~Babaee{$^4$}}

\newcommand{\acr}{\newline\indent}

\address{\llap{1\,}Department of Pure Mathematics\acr Center of Excellence in Analysis on Algebraic Structures\acr Ferdowsi University of
Mashhad\acr 
P.O.Box 1159-91775\acr Mashhad, Iran}
\email{bmashf@um.ac.ir}

\address{\llap{2\,}Department of Pure Mathematics\acr Center of Excellence in Analysis on Algebraic Structures\acr Ferdowsi University of
Mashhad\acr 
P.O.Box 1159-91775\acr Mashhad, Iran}
\email{h\_mirebrahimi@um.ac.ir}

\address{\llap{3\,}Department of Pure Mathematics\acr Center of Excellence in Analysis on Algebraic Structures\acr Ferdowsi University of
Mashhad\acr 
P.O.Box 1159-91775\acr Mashhad, Iran}
\email{h.torabi@um.ac.ir}

\address{\llap{4\,}Department of Pure Mathematics\acr Center of Excellence in Analysis on Algebraic Structures\acr Ferdowsi University of
Mashhad\acr 
P.O.Box 1159-91775\acr Mashhad, Iran}
\email{am.babaee@mail.um.ac.ir}



\subjclass[2010]{Primary 54C20,  54D05; Secondary 54F15, 54G15, 54G20, 55Q05} 
\keywords{Small loop group,  Spanier group, Closeness for paths, Homotopically path Hausdorff space.}

\begin{abstract}
In this paper, we study some properties of homotopical closeness for paths. We define the quasi-small loop group as the subgroup of all classes of loops that are homotopically close to null-homotopic loops, denoted by $\pi_1^{qs} (X, x)$ for a pointed space $(X, x)$. Then we prove that, unlike the small loop group, the quasi-small loop group $\pi_1^{qs}(X, x)$ does not depend on the base point, and that it is a normal subgroup containing $\pi_1^{sg}(X, x)$, the small generated subgroup of the fundamental group. Also, we show that a space $X$ is homotopically path Hausdorff if and only if $\pi_1^{qs} (X, x)$ is trivial. Finally, as consequences, we give some relationships between the quasi-small loop group and the quasi-topological fundamental group.
\end{abstract}

\maketitle

\section{Introduction and Motivation}

Paths in   topological spaces interpret how to transfer from a point to another one. The path space is the set of all paths in a given topological space equipped with some topology, which consists of some information about transferring in  topological spaces. For instance, in configuration spaces ---some specific topological spaces which are used in physics, mechanics, robotics, and so on---, a path means a stable motion from a configuration to another one. Thus, the study of paths and path spaces of a topological space, is useful to make some computations easier in other fields  (see \cite{Far}). In this paper, we study a relation between paths, called homotopical closeness, which presents some properties of the  topology of the path space. There are several topologies defined on the path space, each of which has some information and applications. 
The notion of closeness is a generalization of the   smallness studied in \cite{3} for closed paths called loops. Some specific loops have information about some properties of the topological space, path space, and local properties of the space; see \cite{KarRep}.
Virk \cite{3} defined the small loop as a loop $\alpha$ such that for every open neighborhood $U$ of $\alpha(1)$, there exists a loop  homotopic to $\alpha$ contained in $U$.   
Also, Virk \cite{2} assumed the smallness as a special case of closeness; that is, a loop is small whenever it is close to the constant loop. Moreover, Virk \cite[Definition 58]{2} introduced the concept of homotopical closeness for two maps $f,g:K\to X$ on a compact Hausdorff space $K$ such that $f\not \simeq  g$. Here we recall  Virk's definition of closeness in the case of paths with some modifications because of its technical results. 
\begin{definition}\label{ASL}
Let $f $ and $g$ be two paths in $X$ with $f (0)=g (0)$ and $f (1)=g(1)$. We say $f$ is \textit{homotopically close} to $g$ relative to $\dot{I}$ (denoted by $f \xrightarrow{close}g$ rel  $\dot{I}$), if for every partition $0=t_0 < t_1 < \cdots < t_n =1$ and every sequence of open sets $U_1 ,\ldots ,U_n$ with $g ([t_{i-1},t_i ])\subset U_i$, there exists a path $\gamma$ satisfying $\gamma ([t_{i-1},t_i ])\subset U_i$ for $1\leq i\leq n$ and $\gamma (t_i )=g (t_i )$ for $0\leq i\leq n$ such that $\gamma \simeq f$ rel $\dot{I}$. 
\end{definition}
In this paper, closeness will always refer to Definition \ref{ASL}. Although, Definition \ref{ASL} may seem to be a special case of Definition 58 of Virk \cite{2}, there are two major differences.
First, if $f\simeq g$ rel $\dot{I}$, then by Definition 58 of Virk \cite{2} $f$ is not close to $g$, but by Definition \ref{ASL}, $f$ is homotopically close to $g$ relative to $\dot{I}$.
%
In fact, we require trivial cases to form a group structure. Moreover, to obtain a normal subgroup structure, the condition of being equal at the points $t_i$ is added;  that is, $\gamma (t_i )=g (t_i )$ for $0\leq i\leq n$.

In Section 2, we investigate basic properties of homotopical closeness of paths, for instance preserving by continuous maps, preserving by the concatenation of loops, and so forth. We need these properties to define a subgroup of the fundamental group, called quasi-small loop group as a generalization of the small loop group. The fundamental group $\pi_1(X, x)$ for a pointed space $(X, x)$, the set of all homotopy classes of loops, can be considered as a subset of the quotient of the path space induced by the equivalence relation of homotopy between paths. The fundamental group is a useful tool to classify and study topological spaces. Moreover, it can help to obtain some properties of the path space. Also, some specific subgroups, such as small loop group, small generated subgroup, Spanier subgroup, and so on of the fundamental group, are used as new  tools to study topological spaces. These subgroups were used to obtain information about covering spaces, path spaces, topologized fundamental group, the local properties of the topological spaces, and so on; see \cite{BabMas3, BabMas4, 31}. For instance, Virk \cite{3} introduced the small loop
group $\pi_1^s (X, x)$ as a subgroup of the fundamental group $\pi_1 (X, x)$, consisting of all homotopy classes of small loops, and studied its impact on covering spaces. The small loop group is a subgroup of $\pi_1(X, x)$, but it is not necessarily a normal subgroup.

By the definition of smallness,  a non-trivial loop $\alpha$ at $x$ is small if and only if $\alpha \xrightarrow{close} c_x$. 
Therefore we can restate $\pi_1^s (X,x)$ as follows:
$$
 \pi_1^s (X,x)=\{ [f]\in \pi_1 (X,x)\; |\; f\xrightarrow{close} c_x \}.
$$
In general, $\pi_1^s(X, x)$ may admit different structures at different points. In order to have a subgroup not depending on the base point, Virk \cite{3} introduced the small generated subgroup, denoted by $\pi^{sg}_1 (X, x)$, as the subgroup generated by the set
$$
\{ [\beta *\alpha *\beta^{-1}]\; |\; \beta \in P(X,x), \; [\alpha ]\in \pi_1^s (X,\beta (1)) \} ,
$$
where $P(X, x)$ is the space of all paths in $X$ with initial point $x$. In other words,
$$
\pi_1^{sg}(X, x) = \langle \{ [\beta *\alpha *\beta^{-1}]\; |\; \beta \in P(X,x), \; \alpha \xrightarrow{close} c_{\beta (1)} \} \rangle.
$$
In Lemma \ref{02},  we show that $\alpha \xrightarrow{close} c_{\beta (1)}$ if and only if $\beta *\alpha *\beta^{-1} \xrightarrow{close} \beta *c_{\beta (1)}*\beta^{-1}$.
Obviously for $\beta \neq c_x$, we have $\beta *c_{\beta (1)}*\beta^{-1}\neq c_{x}$, but $\beta *c_{\beta (1)}*\beta^{-1} \in [c_x ]$. This property motivates us to generalize the small generated subgroup to the following subgroup studied in Section 3.
\begin{definition}\label{def1}
Let $X$ be a topological space and let $x \in X$. 
We define the set $\pi_{1}^{qs}(X,x)$ as follows:
\[
\pi_1^{qs} (X,x)=\{ [f]\in \pi_1 (X,x)\; |\; f\xrightarrow{close}f' , \text{for some}\;  f'\in [c_x]\} .
\]
We see that the set $\pi_{1}^{qs}(X,x )$ is a normal subgroup of $\pi_1(X, x)$ (see Theorem \ref{th3.2}), which we call the \textit{quasi-small loop group} of $(X,x)$. \end{definition}
In Section 3, for an arbitrary subset $H$ of the fundamental group $\pi_1(X, x)$, the quasi-small loop group is generalized to the $H$-quasi-small loop subset, denoted by $\pi_H^{qs} (X, x)$, as the subset consisting of all classes of homotopically  close loops to some loop whose class belongs to $H$. 
We prove the following statements for the $H$-quasi-small loop group:
\begin{itemize}
\item
 $\pi_H^{qs}(X, x)$ is a (normal) subgroup of the fundamental group $\pi_1(X, x)$, if $H$ is a (normal) subgroup of $\pi_1(X, x)$ (Theorem \ref{th3.2}).
\item
For any subgroup $H$, $\pi_H^{qs} (X, x) = H \pi_1^{qs}(X, x)$ (Proposition \ref{4444}). Therefore, if $H$ contains $\pi_1^{qs} (X, x)$, then $\pi_H^{qs} (X, x) = H$ (Corollary \ref{co3.4}).
\end{itemize}

In this paper, we present some spaces with $\pi_1^s (X,x)=1$ while they  have  loops that behave somehow as if they are small loops, that is, $\pi_1^{qs} (X,x)\neq 1$ (see Figures \ref{fig2} and  \ref{fig1}). Moreover,  we prove that, unlike $\pi_1^s  (X,x)$, the subgroup $\pi_1^{qs} (X,x)$ is independent of the base point. 
We prove the following sequence of inequalities (Proposition \ref{relation}):
\begin{equation}\tag{\ref{eqmain}}
\pi_{1}^{sg}(X,x)\leqslant \pi_1^{qs} (X,x)\leqslant \pi_{1}^{sp}(X,x).
\end{equation}

Moreover, by some examples, we show that the above inclusions may be strict (Remark \ref{rem1}). One of the example was constructed  in \cite[p. 370]{2} by modifying the Harmonic archipelago space constructed by attaching cells to some one-dimensional space. The Harmonic archipelago is a well-known space obtained by attaching cells to Hawaiian earring, whose homology and cohomology groups were calculated by Karimov-Repov\v{s} \cite{KarRephom}. There are different ways for attaching cells, and each way may yield a unique space (see Eda-Karimov-Repov\v{s} \cite{EdaKar} and Eda et al. \cite{EdaSnake}).
Some of which were constructed to be  counterexamples for some natural conjectures (see Karimov et al. \cite{KarRep} and Male\v{s}i\v{c} et al. \cite{MalRep}).
Some  spaces obtained by cells are called cell-like spaces and they have unexpected behavior (Eda-Karimov-Repov\v{s} \cite{EdaKar2n, EdaKar3n}). For more information about these spaces see Eda-Karimov-Repov\v{s} \cite{EdaKar4n, EdaKar5n}.

Hausdorffness, the second condition of separability,  is the ability to separate points by disjoint open sets, and  it is an  essential condition to verify metrizability of the spaces.  Homotopical Hausdorffness, introduced in \cite{Con}, as named, is  the ability to separate paths and loops. The original homotopical Hausdorffness is not to have a small loop and it is equivalent to the path space being Hausdorff; see \cite{FisZas}. Homotopical Hausdorffness was generalized and modified in several ways; some of which are equivalent to some specific path spaces being Hausdorff \cite{BabMas3, BroDyd, FisZas}.  
The concepts of homotopical smallness and closeness are related to various versions of the property of  homotopical Hausdorffness (Brazas-Fabel \cite{1}, Cannon-Conner \cite{Can}, and Conner et al.  \cite{Con}).  
In this paper, we discuss homotopical path Hausdorffness defined as follows.

\begin{definition}\label{def10}
A topological space $X$ is called
\begin{enumerate}
\item
\textit{homotopically Hausdorff} if for every $x \in X$ and for every non-trivial $\gamma \in \pi_1 (X, x)$, there exists a neighborhood
$U$ of $x$ such that no loop in $U$ is homotopic to $\gamma$ in $X$;
\item
\textit{homotopically path Hausdorff relative to a subset}  $H\subseteq \pi_1 (X, x)$ if for every pair of paths $\alpha, \beta \in P(X, x)$ such that $\alpha (1)=\beta (1)$ and $[\alpha * \beta^{-1}]\not \in H$, there are a partition $0 = t_0 < t_1 < t_2 < \cdots < t_n = 1$ and a sequence of open sets $U_1,U_2, \ldots ,U_n$ with $\alpha ([t_{i-1}, t_i ]) \subset U_i$, such that if $\gamma : [0, 1] \to 
X$ is another path satisfying $\gamma ([t_{i-1}, t_i ]) \subset U_i$ for $1 \leq  i \leq  n$ and $\gamma (t_i ) = \alpha (t_i )$ for
$0 \leq i \leq n$, then $[\gamma * \beta^{-1}] \not \in H$.
\item
\textit{homotopically path Hausdorff }if it is homotopically path Hausdorff relative to the trivial subgroup $H = 1$.
\end{enumerate}
\end{definition}

The property of homotopical path Hausdorffness is important for distinguishing the classes of loops and also for the existence of a certain generalized covering \cite{1}. We find a close relation between being homotopically path Hausdorff and quasi-small loop group of a space $X$. We prove that a space $X$ is homotopically path Hausdorff  if and only if $\pi_1^{qs} (X, x) = 1$ (Theorem \ref{s}). Then, for a homotopically path Hausdorff space $X$, we obtain that $\pi_H^{qs} (X, x) = H$ for any subgroup $H$ (Theorem \ref{thm100} (2)).
It is a generalization of the well-known fact: $X$ is homotopically Hausdorff if and only if $\pi_1^s(X,x) =1$. Previously, some other conditions were found for a space to be  homotopically (path) Hausdorff relative to $H$ by using the topologized fundamental group \cite{BabMas3, PasMas}. Also, by the structures of Hawaiian groups, the authors  \cite{BabMas4} presented some equivalent conditions for a space to be homotopically Hausdorff relative to $H$.

The fundamental group contains some topological properties of the spaces, and  topologists intend to equip the fundamental group with some topologies to use it as a strong tool. Various topologies exist on the fundamental group and the path space; each of which has its properties and applications in covering theory, Hawaiian groups, and so on; see \cite{BabMas3, 1, KarRep}. Moreover, topologies of the fundamental group reveal some properties of the spaces such as local properties, the classification of the spaces, and so on. The topologized fundamental group was investigated by closed subgroups, open subgroups, convergent sequences, and so on; see \cite{BabMas3, 1, PasMas}. Some subgroups, such as small subgroup, small generated subgroup, and Spanier subgroup, have strategic roles in studying some topologies of the fundamental group. In this paper, we consider the quasi-topological group recalled by Brazas-Fabel \cite{1}, and  we give some relationships between the quasi-small loop group and the quasi-topological fundamental group $\pi_1^{qtop} (X,x)$. In fact,  it was proved in \cite{1} that the following properties are  equivalent:
\begin{itemize}
\item
$X$ is homotopically path Hausdorff; 
\item
The trivial subgroup is closed in $\pi_1^{qtop} (X, x)$;
\item
$\pi_1^{qtop} (X, x)$ satisfies the first separability axiom, $T_1$ axiom.
\end{itemize}
In this paper, we present an equivalent condition for the above equivalences, by using algebraic structures; that is, $\pi_1^{qtop} (X, x)$ satisfies the $T_1$ axiom if and only if
$\pi_1^{qs}(X, x) = 1$.


Throughout this paper, every topological space is assumed to be path connected. Also,  by $[\cdot]$ we mean the homotopy class of paths relative to the boundary of unit interval $\dot{I}$.

\section{Homotopical closeness}
The aim of this section is to investigate some basic properties of homotopical closeness of paths in topological spaces. These properties help us to study the quasi-small loop group in the next section. First note that for locally path-connected spaces, Definition \ref{ASL} and Definition 58 of Virk \cite{2} for paths (except for homotopic ones) coincide as follows.

\begin{remark}\label{re2.1}
Let $X$ be a locally path-connected space. If two paths $f$ and $g$  in $X$ are not homotopic, then $f$ is close to $g$ in the sense of Definition 58 of Virk \cite{2}, if and only if $f$ is homotopically close to $g$ in the sense of Definition \ref{ASL}.
In order to prove this, assume that $f$ is close to $g$ in the sense of Definition 58 of Virk \cite{2}. Then for the sequence of open sets $U_1, \ldots, U_n$, there exist a partition $0= t_0 < t_1<\cdots < t_n = 1$ with $g([t_{i-1}, t_i]) \subseteq U_i$ and a map $f' \simeq f$ rel $\dot{I}$ with $f'([t_{i -1}, t_i]) \subseteq U_i$ for $1 \le i \le n$. Since $X$ is locally path connected, there exists a path $\lambda_i$ from $f'(t_i)$ to $g(t_i)$ in $U_i$ for $1 \le i \le n-1$. Define $\gamma|_{[t_0, t_1]} = f'|_{[t_0, t_1]} * \lambda_1$, $\gamma|_{[t_{i-1}, t_i]} =\lambda_{i-1}^{-1} * f'|_{[t_{i-1}, t_i]} * \lambda_i$  for $2\le i \le n-1$, and $\gamma|_{[t_{n-1}, t_n]} = \lambda_{n-1}^{-1} * f'|_{[t_{n-1}, t_n]}$. One can check that $\gamma$ satisfies Definition \ref{ASL}.
For non-locally path-connected spaces, 
Definition \ref{ASL} differs from Definition 58 of Virk \cite{2} as stated in Example \ref{ex2.2}.
\end{remark}

The closeness of maps is not topologically invariant, as shown by the  example in \cite[p. 368]{2}.  In the following proposition, using the condition $\gamma(t_i)  = g(t_i)$ in Definition \ref{ASL}, we prove that  continuous maps preserve homotopical closeness for paths (see \cite[Proposition 46]{2} and \cite[Corollary 47]{2}).
\begin{prop}\label{pr2.3}
Assume that $f$ and $g$ are paths in $X$ with $f(0)=g(0)$ and  $f(1)=g(1)$ and that $\phi : X \to Y$ is a  continuous map. If $f\xrightarrow{close}g$ rel $\dot{I}$, then $\phi f \xrightarrow{close}\phi g$ rel $\dot{I}$.
\end{prop}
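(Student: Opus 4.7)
The plan is to pull back the data witnessing closeness of $\phi f$ and $\phi g$ through $\phi$, apply the hypothesis in $X$, and then push the resulting path forward by $\phi$. Concretely, fix an arbitrary partition $0 = t_0 < t_1 < \cdots < t_n = 1$ and a sequence of open sets $V_1, \ldots, V_n$ in $Y$ with $\phi g([t_{i-1}, t_i]) \subseteq V_i$ for each $1 \le i \le n$. I need to produce a path $\delta$ in $Y$ with $\delta([t_{i-1}, t_i]) \subseteq V_i$, $\delta(t_i) = \phi g(t_i)$, and $\delta \simeq \phi f$ rel $\dot{I}$.

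The obvious preimages $U_i := \phi^{-1}(V_i)$ are open in $X$ by continuity of $\phi$, and the inclusion $\phi g([t_{i-1}, t_i]) \subseteq V_i$ gives $g([t_{i-1}, t_i]) \subseteq U_i$. Thus the partition $\{t_i\}$ and the open cover $\{U_i\}$ constitute admissible test data for the hypothesis $f \xrightarrow{close} g$ rel $\dot{I}$. Applying Definition \ref{ASL}, there exists a path $\gamma$ in $X$ with $\gamma([t_{i-1}, t_i]) \subseteq U_i$, $\gamma(t_i) = g(t_i)$ for $0 \le i \le n$, and a homotopy $H: I \times I \to X$ with $H(\cdot, 0) = \gamma$, $H(\cdot, 1) = f$, $H(0, s) = f(0)$, $H(1, s) = f(1)$.

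Now set $\delta := \phi \gamma$. Then $\delta([t_{i-1}, t_i]) = \phi(\gamma([t_{i-1}, t_i])) \subseteq \phi(U_i) \subseteq V_i$ and $\delta(t_i) = \phi \gamma(t_i) = \phi g(t_i)$, so $\delta$ meets the partition/open-set constraints. Finally, $\phi H : I \times I \to Y$ is a continuous homotopy between $\phi \gamma = \delta$ and $\phi f$ rel $\dot{I}$, so $\delta \simeq \phi f$ rel $\dot{I}$. Since the partition and open sets were arbitrary, this verifies $\phi f \xrightarrow{close} \phi g$ rel $\dot{I}$.

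There is essentially no obstacle: the proof is a direct functoriality argument, and the only reason the statement is non-trivial (compared with Virk's original notion of closeness, which is not topologically invariant as noted in the paragraph preceding the proposition) is precisely the added endpoint condition $\gamma(t_i) = g(t_i)$ built into Definition \ref{ASL}, which transports cleanly under $\phi$ because $\phi \gamma(t_i) = \phi g(t_i)$ follows automatically. The only subtlety worth flagging is to verify that pulling back $V_i$ to $U_i$ preserves the covering property $g([t_{i-1}, t_i]) \subseteq U_i$, which is immediate.
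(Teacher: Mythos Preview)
Your proof is correct and follows essentially the same approach as the paper: pull back the open sets along $\phi$, apply the closeness hypothesis in $X$, and push the resulting path $\gamma$ forward to $\phi\gamma$. The only difference is notational (you call the open sets in $Y$ by $V_i$ and their preimages $U_i$, and you spell out the homotopy $\phi H$ explicitly), but the argument is identical.
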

\begin{proof}
Consider a partition $0=t_0 < t_1 < \cdots < t_n =1$ and a sequence of open sets $U_1 ,\ldots ,U_n$ of $Y$ with $\phi g ([t_{i-1},t_i ])\subset U_i$. Then $g([t_{i-1},t_i ])\subset \phi^{-1}(U_i)$ for $1\leq i\leq n$. Since $f\xrightarrow{close}g$ rel $\dot{I}$, there exists a path $\gamma :I\to X$ satisfying $\gamma ([t_{i-1},t_i ])\subset \phi^{-1}(U_i)$ for $1\leq i\leq n$ and $\gamma (t_i )=g (t_i )$ for $0\leq i\leq n$ such that $\gamma \simeq f$ rel $\dot{I}$. Hence we have $\phi\gamma ([t_{i-1},t_i ])\subset (U_i)$ for $1\leq i\leq n$ and $\phi\gamma (t_i )=\phi g (t_i )$ for $0\leq i\leq n$ with $\phi\gamma \simeq \phi f$ rel $\dot{I}$. This means $\phi f\xrightarrow{close}\phi g$ rel $\dot{I}$.
\end{proof}

By the definition of homotopical closeness of paths, one can easily prove the following statement.

\begin{lemma}\label{01}
Assume that   $f_0 ,f_1 ,g$ are  paths in $X$ with  $f_0 (0)=f_1 (0)=g (0)$ and $f_0 (1)=f_1 (1)= g(1)$. If $f_0 \xrightarrow{close} g$  rel $\dot{I}$ and $
f_0 \simeq f_1$ rel $\dot{I}$,   then $f_1 \xrightarrow{close} g$ rel $\dot{I}$.
\end{lemma}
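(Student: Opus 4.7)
The plan is to unfold both hypotheses and observe that the witnessing path supplied by $f_0 \xrightarrow{close} g$ automatically witnesses $f_1 \xrightarrow{close} g$, once we invoke transitivity of the relation $\simeq$ rel $\dot{I}$.

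First I would fix an arbitrary partition $0 = t_0 < t_1 < \cdots < t_n = 1$ together with open sets $U_1, \ldots, U_n$ such that $g([t_{i-1}, t_i]) \subset U_i$ for $1 \le i \le n$. This data is exactly what Definition \ref{ASL} asks us to respond to when verifying $f_1 \xrightarrow{close} g$ rel $\dot{I}$.

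Next I would apply the hypothesis $f_0 \xrightarrow{close} g$ rel $\dot{I}$ to this same partition and the same sequence of open sets. By Definition \ref{ASL}, this produces a path $\gamma : I \to X$ satisfying $\gamma([t_{i-1}, t_i]) \subset U_i$ for $1 \le i \le n$, $\gamma(t_i) = g(t_i)$ for $0 \le i \le n$, and $\gamma \simeq f_0$ rel $\dot{I}$.

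Finally, since $f_0 \simeq f_1$ rel $\dot{I}$ is given, transitivity of homotopy rel $\dot{I}$ yields $\gamma \simeq f_1$ rel $\dot{I}$. Thus the same $\gamma$ already fulfills all three requirements of Definition \ref{ASL} for the pair $(f_1, g)$, and so $f_1 \xrightarrow{close} g$ rel $\dot{I}$. There is no real obstacle here; the only substantive point is that the existential conditions in Definition \ref{ASL} refer to the path $f$ solely through its homotopy class, so replacing $f_0$ by any $f_1$ in the same class leaves the definition unchanged.
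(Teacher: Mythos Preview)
Your proof is correct and is precisely the direct argument the paper has in mind; the paper itself does not spell out a proof but merely remarks that the lemma follows easily from the definition of homotopical closeness. Your observation that Definition~\ref{ASL} depends on $f$ only through its homotopy class is exactly the point.
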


\begin{remark}\label{000}
Assume that   $f  ,g_0 ,g_1 $ are  paths in $X$ with  $f (0)=g_0 (0)=g_1 (0)$ and $f (1)=g_0 (1)=g_1 (1)$. 
\begin{enumerate}
\item
The conditions $f \xrightarrow{close}g_0$ rel $\dot{I}$ and $g_0 \simeq g_1 $ rel $\dot{I}$ do not imply that $f \xrightarrow{close}g_1 $ rel $\dot{I}$;

Consider the space $C(\mathbb{S}^1 ,\{ 0\})$ introduced by Virk \cite[p. 370]{2}, which is a modification of the Harmonic archipelago space. The space $C(\mathbb{S}^1 ,\{ 0\})$ is obtained by attaching $2$-cells to a disjoint union of countably infinite number of circles with a point in common. The radii of circles tend to $1$, the circle of radius $1$ is denoted by $S_{\infty}^1$, and each $2$-cell is attached between any two consecutive circles (see Figure \ref{fig2}). The attached $2$-cells have humps (the subspaces of cells consisting of all the points having the most $z$-coordinate), which converge to entire $\big( S^1_{\infty} - \{0\} \big) \times \{1\}$, so that $C(\mathbb{S}^1,{0})$ is not locally path connected at any point of $S^1_{\infty} - \{0\}$. We attach a 2-cell $e^2$ to $S^{1}_{\infty}$   by which the loop $f'$  (indicated  in the Figure  \ref{fig2}) is null-homotopic. We denote the resulting space by $Z$ (Figure \ref{fig2}).  Clearly $f\xrightarrow{close}f' $ rel $\dot{I}$ and $f'\in [c_x]$. Put $g_0 =f'$ and $g_1 =c_x$. 

\item
The conditions $f \xrightarrow{close}g_0$ rel $\dot{I}$ and $g_0 \xrightarrow{close}g_1$ rel $\dot{I}$ do not imply that  $f \xrightarrow{close}g_1 $ rel $\dot{I}$. 
By Definition \ref{ASL}, $g_0 \simeq g_1 $ rel $\dot{I}$ implies $g_0 \xrightarrow{close} g_1$ rel $\dot{I}$. So it is enough to take $f, g_{0}, g_{1}$ as item (1).

\item
The condition $f \xrightarrow{close}g_0$ rel $\dot{I}$ does not imply that $g_0 \xrightarrow{close}f $ rel $\dot{I}$. Equivalently, homotopical 
closeness is not symmetric (see Remark \ref{re2.1} and \cite[p. 369]{2}). 


\end{enumerate}
\end{remark}

\begin{figure}[tb]
\centering
\includegraphics[scale=0.4]{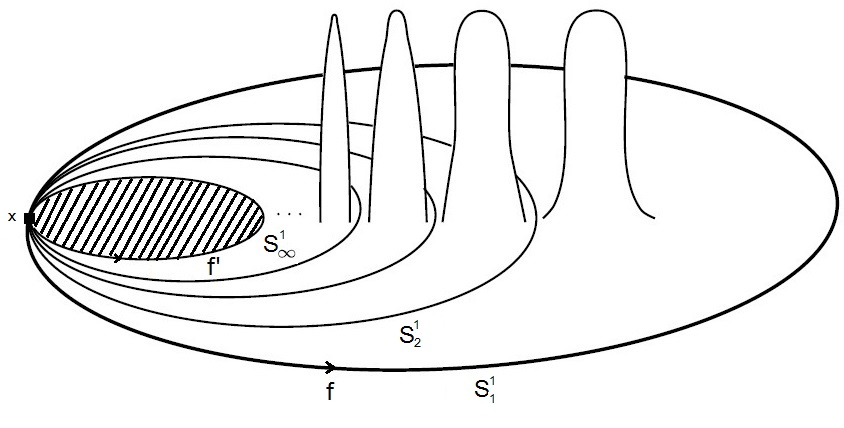}
\caption{The space $Z$}\label{fig2}
\end{figure}
We recall  the notion of the Spanier group as presented in Fischer et al. \cite{On} as follows.
\begin{definition}[\cite{On}]
Let $\mathcal{U} = \{ U_i \; |\; i\in I \}$ be  an arbitrary open covering of $X$. Then define $\pi (\mathcal{U}, x)$ to be   the subgroup of $\pi_1 (X, x)$ that contains all homotopy classes having representatives of the following type:
$$
\prod_{j=1}^{n}u_j *v_j * u^{-1}_{j},
$$
where $u_j$'s are paths (starting at the base point $x$) and each $v_j$ is a loop inside one of the neighborhoods $U_i \in \mathcal{U}$. The subgroup $\pi (\mathcal{U}, x)$ is called the Spanier group of $(X, x)$ with respect to $\mathcal{U}$.
Let $\mathcal{U}$ and $\mathcal{V}$ be open covers of $X$ and let $\mathcal{U}$ be a refinement of $\mathcal{V}$. Then $\pi (\mathcal{U}, x) \subset \pi (\mathcal{V}, x)$. This inclusion induces an inverse limit defined via the directed system of all covers with respect to the refinement. Such limit is called  the (unbased) \textit{Spanier group} of the space $X$, and we denote it by $\pi_{1}^{sp}(X,x)$.
\end{definition}
Virk  \cite{2} mentioned that close loops have no influence on  groups $\pi_1^s$ and $\pi_{1}^{sg}$  but may \textit{interfere with} $\pi_{1}^{sp}$. In the following proposition, we state the relationship between closeness and the Spanier group (compare with \cite[Proposition 50]{2}, in the case of locally path connected spaces). 
\begin{prop}\label{SP}
If $f\xrightarrow{close}g $ rel $\dot{I}$, then $[f*g^{-1}] \in \pi_{1}^{sp}(X,x)$.
\end{prop}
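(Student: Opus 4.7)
The plan is to verify, for every open cover $\mathcal{U}$ of $X$, that $[f*g^{-1}] \in \pi(\mathcal{U},x)$; then the inverse limit description of $\pi_1^{sp}(X,x)$ immediately gives $[f*g^{-1}] \in \pi_1^{sp}(X,x)$. So fix an open cover $\mathcal{U}$ and work with it.

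First I would use compactness of $I$ to refine things to the cover. The family $\{g^{-1}(U) : U \in \mathcal{U}\}$ is an open cover of the compact metric space $[0,1]$, so it admits a Lebesgue number. Choose a partition $0 = t_0 < t_1 < \cdots < t_n = 1$ fine enough that for each $i$ there exists $U_i \in \mathcal{U}$ with $g([t_{i-1},t_i]) \subset U_i$. Now invoke the hypothesis $f \xrightarrow{close} g$ rel $\dot{I}$ applied to this partition and to $U_1,\dots,U_n$: we obtain a path $\gamma$ homotopic to $f$ rel $\dot{I}$ such that $\gamma([t_{i-1},t_i]) \subset U_i$ for every $i$ and $\gamma(t_i) = g(t_i)$ for every $i$. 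Since $[f*g^{-1}] = [\gamma*g^{-1}]$, it suffices to exhibit $\gamma*g^{-1}$ as a Spanier-type product for $\mathcal{U}$.

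The key algebraic step is the decomposition of $\gamma*g^{-1}$ into conjugates of small loops. Write $\gamma_i := \gamma|_{[t_{i-1},t_i]}$ and $g_i := g|_{[t_{i-1},t_i]}$, which are paths from $g(t_{i-1})$ to $g(t_i)$ contained in $U_i$ (here the condition $\gamma(t_i) = g(t_i)$ is essential). Set $\alpha_1 := c_x$ and $\alpha_i := g_1 * g_2 * \cdots * g_{i-1}$ for $i \geq 2$, so that $\alpha_i$ is a path from $x$ to $g(t_{i-1})$. Define the loop $v_i := \gamma_i * g_i^{-1}$, based at $g(t_{i-1})$ and lying inside the single open set $U_i \in \mathcal{U}$. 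Then each $\alpha_i * v_i * \alpha_i^{-1}$ is a loop at $x$ of the required Spanier form, and I claim
\[
[\gamma * g^{-1}] \;=\; \prod_{i=1}^{n}\,[\alpha_i * v_i * \alpha_i^{-1}] \;\in\; \pi(\mathcal{U},x).
\]
This identity is verified by the standard telescoping cancellation: expanding the right-hand side and cancelling each pair $\alpha_{i+1}^{-1} \cdot \alpha_{i+1} = g_1^{-1} \cdots g_i^{-1} \cdot g_1 \cdots g_i$ between consecutive factors collapses the product, up to homotopy rel $\dot{I}$, to $\gamma_1 * \cdots * \gamma_n * g_n^{-1} * \cdots * g_1^{-1} \simeq \gamma * g^{-1}$.

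I do not anticipate serious obstacles. The only point that needs care is the bookkeeping in the telescoping identity, specifically checking that the basepoint assignments line up (the loop $v_i$ is based at $g(t_{i-1})$, not at $x$, which is exactly why the conjugation by $\alpha_i$ is necessary) and that one reparametrises consistently so that the concatenations genuinely match $\gamma * g^{-1}$ up to homotopy rel $\dot{I}$. The role of the added constraint $\gamma(t_i) = g(t_i)$ from Definition \ref{ASL} is precisely what makes each $v_i$ a loop and what allows the Spanier-group representation to be written down; without it the argument would break at the first step.
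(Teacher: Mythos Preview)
Your proof is correct and follows essentially the same route as the paper: both fix an arbitrary cover $\mathcal{U}$, use compactness to choose a partition subordinate to it, invoke the closeness hypothesis to produce $\gamma$, and then decompose $[\gamma*g^{-1}]$ as a product of conjugates $\alpha_i * v_i * \alpha_i^{-1}$ with $v_i = \gamma_i * g_i^{-1}$ a loop in $U_i$ and $\alpha_i = g|_{[0,t_{i-1}]}$. Your write-up is actually a bit more careful than the paper's on two points: you correctly note the basepoint of $v_i$ as $g(t_{i-1})$ (the paper writes $g(t_i)$, which is a slip), and you spell out the telescoping cancellation explicitly.
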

\begin{proof}
To prove the claim, we partially imitate the proof of  \cite[Proposition 50]{2}. Fix a cover $\mathcal{U}$ of $X$ and choose a finite subfamily
$U_1, \ldots , U_n \subset \mathcal{U}$ covering $g([0, 1])$ so that for some partition $0 = t_0 < t_1 < \ldots < t_n = 1$, the set $U_i$ contains $g([t_{i-1}, t_i ])$,  for all $i$. Since $f\xrightarrow{close}g $ rel $\dot{I}$, there exists a path $\gamma :I\to X$ satisfying $\gamma ([t_{i-1},t_i ])\subset U_i$ for $1\leq i\leq n$ and $\gamma (t_i )=g (t_i )$ for $0\leq i\leq n$ such that $\gamma \simeq f $ rel $\dot{I}$. Observe that the oriented loop $Q_j$ defined as a
concatenation $\gamma |_{[t _{i-1},t_i ]} * (g|_{[t_{i-1},t_i ]})^{-1}$ is based at $g(t_i)$ and contained in $U_i$ . The class $[ \gamma * g^{-1}]$ is contained in $\pi_1 (U, x)$ because it can be expressed as
$$
\prod_{i=1}^{n}g|_{[0,t_{i-1}]}*Q_i *(g|_{[t_{0},t_{i-1}]})^{-1}.
$$
Hence $[f*g^{-1}]=[\gamma *g^{-1}]\in \pi_{1}^{sp}(X,x)$.
\end{proof}
The converse of Proposition \ref{SP} does not hold in general. To see this, consider loops $[f],[g]\in \pi_1 (X,x)$ for which $f\xrightarrow{close}g\; rel\; \dot{I}$ but $g\not \xrightarrow{close}f\; rel\; \dot{I}$ (see item (3) of  Remark \ref{000}). Since $f\xrightarrow{close}g\; rel\; \dot{I}$,  Proposition \ref{SP} implies that $[f][g]^{-1}\in \pi_1^{sp}(X,x)$, and so $[g][f]^{-1}\in \pi_1^{sp}(X,x)$. If the converse holds, since $[g][f]^{-1}\in \pi_1^{sp}(X,x)$, then we must have $g\xrightarrow{close}f\; rel\; \dot{I}$, which is a contradiction. 
\begin{lemma}\label{02}
Assume that   $f_0 ,f_1 ,g_0 ,g_1$ are some paths in $X$ with  
$$
f_0 \xrightarrow{close}  g_0  \text{ rel } \dot{I} \quad and\quad f_1 \xrightarrow{close} g_1 \text{ rel }\dot{I}.
$$
If $f_0 (1)=f_1 (0)=g_0 (1)=g_{1}(0)$, then $f_0 *f_1 \xrightarrow{close} g_0 *g_1 $ rel $\dot{I}$. Moreover, $f_{0}^{-1} \xrightarrow{close} g_{0}^{-1}$ rel $\dot{I}$.
\end{lemma}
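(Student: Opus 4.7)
\medskip
\noindent\textbf{Proof plan for Lemma \ref{02}.}

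The plan is to reduce both statements directly to the hypothesis by rescaling partitions and reindexing open covers. For the concatenation statement, I would start with an arbitrary partition $0 = t_0 < t_1 < \cdots < t_n = 1$ and a sequence of open sets $U_1, \ldots, U_n$ with $(g_0 * g_1)([t_{i-1}, t_i]) \subset U_i$. The awkward case is when $1/2$ is not a partition point: then there is a unique $k$ with $t_{k-1} < 1/2 < t_k$, and I would refine the partition by inserting $1/2$, assigning the same open set $U_k$ to the two subintervals $[t_{k-1}, 1/2]$ and $[1/2, t_k]$. After this reduction I may assume $1/2 = t_k$ for some $k$.

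Now I would split the partition into a partition of $[0, 1/2]$ and a partition of $[1/2, 1]$, and rescale each to a partition of $I$: set $s_i = 2 t_i$ for $0 \le i \le k$ with open sets $U_1, \ldots, U_k$ (covering $g_0$), and $s'_i = 2 t_{k+i} - 1$ for $0 \le i \le n-k$ with open sets $U_{k+1}, \ldots, U_n$ (covering $g_1$). Applying $f_0 \xrightarrow{close} g_0$ rel $\dot I$ produces a path $\gamma_0$ with $\gamma_0([s_{i-1}, s_i]) \subset U_i$, $\gamma_0(s_i) = g_0(s_i)$, and $\gamma_0 \simeq f_0$ rel $\dot I$; applying $f_1 \xrightarrow{close} g_1$ rel $\dot I$ produces an analogous $\gamma_1$. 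Concatenating, I set $\gamma = \gamma_0 * \gamma_1$; the endpoint matching $\gamma_0(1) = g_0(1) = g_1(0) = \gamma_1(0)$ is guaranteed by the hypothesis on endpoints. A direct check shows $\gamma([t_{i-1}, t_i]) \subset U_i$ and $\gamma(t_i) = (g_0 * g_1)(t_i)$, while $\gamma = \gamma_0 * \gamma_1 \simeq f_0 * f_1$ rel $\dot I$ follows from the fact that concatenation respects homotopy rel endpoints.

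For the inverse statement $f_0^{-1} \xrightarrow{close} g_0^{-1}$ rel $\dot I$, I would take any partition $0 = t_0 < \cdots < t_n = 1$ with open sets $U_1, \ldots, U_n$ satisfying $g_0^{-1}([t_{i-1}, t_i]) \subset U_i$, and form the reversed partition $s_i = 1 - t_{n-i}$ together with $V_i = U_{n-i+1}$. Then $g_0([s_{i-1}, s_i]) \subset V_i$, so the hypothesis $f_0 \xrightarrow{close} g_0$ rel $\dot I$ yields a path $\delta$ with $\delta([s_{i-1}, s_i]) \subset V_i$, $\delta(s_i) = g_0(s_i)$, and $\delta \simeq f_0$ rel $\dot I$. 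The path $\gamma = \delta^{-1}$ then satisfies $\gamma([t_{i-1}, t_i]) \subset U_i$, $\gamma(t_i) = g_0^{-1}(t_i)$, and $\gamma \simeq f_0^{-1}$ rel $\dot I$.

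The only real obstacle is the bookkeeping in the partition-refinement step for the concatenation: one must make sure that inserting $1/2$ into the partition and duplicating $U_k$ produces a valid instance of Definition \ref{ASL} on each half, and that the resulting two paths glue into a well-defined continuous path whose image on each piece still lies in the originally prescribed $U_i$. Everything else is routine manipulation of homotopies rel $\dot I$.
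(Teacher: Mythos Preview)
Your proposal is correct and follows essentially the same approach as the paper's proof: the paper also splits into the two cases $t_{i_0}=\tfrac12$ and $t_{i_0-1}<\tfrac12<t_{i_0}$, in the latter inserting $t_*=\tfrac12$ with the duplicated open set $U_{i_0}$, then rescales each half, applies the closeness hypothesis separately, and concatenates the resulting $\gamma_0, \gamma_1$. The only cosmetic difference is that you first reduce to the case $\tfrac12=t_k$ and then argue uniformly, whereas the paper carries out the two cases in parallel; for the inverse statement the paper simply declares it trivial, while you spell out the reversed-partition argument.
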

\begin{proof}
Take a partition $0=t_0 < t_1 < \cdots < t_n =1$ and a sequence of open sets $U_1 ,\ldots ,U_n$ with $g_0 *g_1 ([t_{i-1},t_i ])\subset U_i$. We consider the following two cases:
\begin{enumerate}
\item
There exists $i_0 \in \{ 0,1,\ldots ,n\}$ such that $t_{i_0} =\frac{1}{2}$. Then $g_0  ([2t_{i -1},2t_{i}])= g_0 *g_1 ([t_{i -1},t_{i}])\subset U_{i}$ for $1\leq i\leq i_0$, and $g_1  ([2t_{i-1}-1,2t_{i}-1])= g_0 *g_1 ([t_{i -1},t_{i}])\subset U_{i}$ for $i_0 +1 \leq i\leq n$.  Since $f_0 \xrightarrow{close}  g_0 \; rel\; \dot{I}$, for the partition $0=2t_0 < 2t_1 < \cdots < 2t_{i_0}=1$ and  the sequence of open sets $U_1 ,\ldots ,U_{i_0}$, there exists a path $\gamma_0 :[0,1]\to X$ satisfying $\gamma_0 ([2t_{i-1},2t_i ])\subset U_i$ for $1\leq i\leq i_0$ and $\gamma_0 (2t_i )=g_0 (2t_i )$ for $0\leq i\leq i_0$ such that $\gamma_0 \simeq f_0 $ rel $\dot{I}$. Similarly, since $f_1 \xrightarrow{close} g_1 $ rel $\dot{I}$, for the partition $0=2t_{i_0}-1 < 2t_{i_0 +1} -1 < \cdots < 2t_{n}-1=1$ and the  sequence of open sets $U_{i_0 +1} ,\ldots ,U_{n}$, there exists a path $\gamma_1 :[0,1]\to X$ satisfying $\gamma_1 ([2t_{i-1}-1,2t_i -1 ])\subset U_i$ for $i_0 +1\leq i\leq n$ and $\gamma_1 (2t_i -1)=g_1 (2t_i -1)$ for $i_0 \leq i\leq n$ such that $\gamma_1 \simeq f_1 $ rel $\dot{I}$.  One can check easily that  $\gamma =\gamma_0 *\gamma_1$ satisfies $\gamma ([t_{i-1},t_i ])\subset U_i$ for $1\leq i\leq n$ and $\gamma (t_i )=g_0 *g_1 (t_i )$ for $0\leq i\leq n$ such that $\gamma \simeq f_0 *f_1 $ rel $\dot{I}$. 

\item
For some $i_0 \in \{ 0,1,\ldots ,n\}$, the condition $t_{i_0 -1} <\frac{1}{2}<t_{i_0}$ holds. Put $t_* =\frac{1}{2}$. Then $g_0  ([2t_{i -1},2t_{i}])= g_0 *g_1 ([t_{i -1},t_{i}])\subset U_{i}$ for $1\leq i\leq i_0 -1$ and also $g_0  ([2t_{i_0 -1},2t_*])= g_0 *g_1 ([t_{i_0 -1},t_*])\subset g_0 *g_1 ([t_{i_0 -1},t_{i_0}])\subset  U_{i_0}$. Now since $f_0 \xrightarrow{close}  g_0 \; rel\; \dot{I}$, for the partition $0=2t_0 < 2t_1 < \cdots < 2t_{i_0 -1}<2t_* =1$ and the sequence of open sets $U_1 ,\ldots ,U_{i_0}$, there exists a path $\gamma_0 :[0,1]\to X$ satisfying $\gamma_0 ([2t_{i-1},2t_i ])\subset U_i$ for $1\leq i\leq i_0 -1$, $\gamma_0 ([2t_{i_0 -1},2t_* ])\subset U_{i_0}$,  $\gamma_0 (2t_i )=g_0 (2t_i )$ for $0\leq i\leq i_0 -1$, and $\gamma_0 (2t_* )=g_0 (2t_* )$ such that $\gamma_0 \simeq f_0 $ rel $\dot{I}$.

On the other hand, $g_1  ([2t_{i -1}-1,2t_{i}-1])= g_0 *g_1 ([t_{i -1},t_{i}])\subset U_{i}$ for $i_0 + 1\leq i\leq n$ and $g_0  ([2t_* -1, 2t_{i_0}-1])= g_0 *g_1 ([t_* ,t_{i_0}])\subset g_0 *g_1 ([t_{i_0 -1},t_{i_0}])\subset  U_{i_0}$. Since $f_1 \xrightarrow{close} g_1 $ rel $\dot{I}$, for the partition $0=2t_* -1 <2t_{i_0}-1 < 2t_{i_0 +1} -1 < \cdots < 2t_{n}-1=1$ and the sequence of open sets $U_{i_0},U_{i_0 +1} ,\ldots ,U_{n}$, there exists a path $\gamma_1 :[0,1]\to X$ satisfying $\gamma_1 ([2t_{i-1}-1,2t_i -1 ])\subset U_i$ for $i_0 +1\leq i\leq n$, $\gamma_0 ([2t_* -1, 2t_{i_0}-1])\subset U_{i_0}$,  $\gamma_0 (2t_i -1)=g_0 (2t_i -1)$ for $i_0 \leq i\leq n$, and $\gamma_0 (2t_* -1 )=g_0 (2t_* -1)$ such that $\gamma_1 \simeq f_1 $ rel $\dot{I}$.  One can easily check that  $\gamma =\gamma_0 *\gamma_1$ satisfies $\gamma ([t_{i-1},t_i ])\subset U_i$ for $1\leq i\leq n$ and $\gamma (t_i )=g_0 *g_1 (t_i )$ for $0\leq i\leq n$ such that $\gamma \simeq f_0 *f_1 $ rel $\dot{I}$. 

The statement $f_{0}^{-1} \xrightarrow{close} g_{0}^{-1}$ rel $\dot{I}$ trivially holds. 
\end{enumerate}
\end{proof}
\begin{notation}
Assume that   $f$ and $g$ are  paths in $X$ with $f(0)=g(0)$ and $f(1)=g(1)$.   We say   $f$ is close to  $[g]$ relative to $\dot{I}$, denoted by $f\xrightarrow{close} [g]$ rel $\dot{I}$, if $f\xrightarrow{close}g' $ rel $\dot{I}$ for some $g' \in [g]$. 
\end{notation}
Note that $f\xrightarrow{close} [h]$ rel $\dot{I}$ does not imply that $f\xrightarrow{close} h'$ rel $\dot{I}$ for each $h'\in [h]$ (see item (1) of  Remark \ref{000}). Moreover, if $f\xrightarrow{close} h$ rel $\dot{I}$, then $f\xrightarrow{close} [h]$ rel $\dot{I}$, but the converse does not hold. Also, $f \xrightarrow{close}[g]$ rel $\dot{I}$ does not imply that $g \xrightarrow{close}[f]$ rel $\dot{I}$ (see item (3) of Remark \ref{000}).

\section{Quasi-small loop groups}
The  main idea of this section is to study a subgroup of $\pi_1 (X,x)$ that characterizes the property of homotopical path Hausdorffness relative to a subset of fundamental group of the space $X$.

\begin{definition}\label{def3.1n}
Let $H$ be a subset of $\pi_1 (X,x )$. We define the set $\pi_{H}^{qs}(X,x)$ as follows:
$$
\pi_{H}^{qs}(X,x)=\{ [f]\in \pi_1 (X,x)\; |\; f\xrightarrow{close} [h]\; \text{for some}\; [h]\in H \} .
$$
We call the set $\pi_{H}^{qs}(X,x )$ as the  \textit{$H$-quasi-small loop group} of $(X,x)$. 
For the trivial subgroup $H=1$, we write $\pi_{H}^{qs}(X,x)$ as $\pi_1^{qs}(X,x)$ and we call it the \textit{quasi-small loop group} of $(X,x)$.
\end{definition}
Note that by Lemma \ref{01}, it is easy to verify that  belonging $[f]$ to $\pi_{H}^{qs}(X,x)$  is independent of the choice of representative $f$. 
\begin{theorem}\label{th3.2}
If  $H$  is a (normal) subgroup of $\pi_1 (X,x)$, then $\pi_{H}^{qs}(X,x)$ is a (normal) subgroup of $\pi_1 (X,x)$. In particular, $\pi_{1}^{qs}(X,x)$ is a normal subgroup of $\pi_1 (X,x)$.
\end{theorem}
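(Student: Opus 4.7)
The plan is to verify the three subgroup axioms (identity, multiplicative closure, inverses) for $\pi_H^{qs}(X,x)$, then handle normality by conjugation, using Lemma \ref{02} as the workhorse and noting that homotopic paths are automatically close by Definition \ref{ASL}.

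First I would check the identity. Since $H$ is a subgroup, $[c_x] \in H$. The constant path is (trivially) homotopic to itself, hence $c_x \xrightarrow{close} c_x$ rel $\dot{I}$ by Definition \ref{ASL}, so $[c_x] \in \pi_H^{qs}(X,x)$. Next, for closure under multiplication, take $[f_1], [f_2] \in \pi_H^{qs}(X,x)$, so there exist $h_1' \in [h_1]$ and $h_2' \in [h_2]$ with $[h_1], [h_2] \in H$ and $f_i \xrightarrow{close} h_i'$ rel $\dot{I}$. By Lemma \ref{02}, $f_1 * f_2 \xrightarrow{close} h_1' * h_2'$ rel $\dot{I}$, and $[h_1' * h_2'] = [h_1][h_2] \in H$ since $H$ is a subgroup. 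Hence $[f_1][f_2] \in \pi_H^{qs}(X,x)$. For inverses, if $f \xrightarrow{close} h'$ rel $\dot{I}$ with $[h'] \in H$, then by the second clause of Lemma \ref{02}, $f^{-1} \xrightarrow{close} (h')^{-1}$ rel $\dot{I}$, and $[(h')^{-1}] = [h']^{-1} \in H$, so $[f]^{-1} \in \pi_H^{qs}(X,x)$. Well-definedness (independence of the representative $f$) is immediate from Lemma \ref{01}, as already noted.

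For normality, assume additionally that $H$ is normal, let $[f] \in \pi_H^{qs}(X,x)$ with witness $f \xrightarrow{close} h'$ rel $\dot{I}$ and $[h'] \in H$, and pick any $[g] \in \pi_1(X,x)$. Since $g \simeq g$ and $g^{-1} \simeq g^{-1}$ rel $\dot{I}$, Definition \ref{ASL} gives $g \xrightarrow{close} g$ and $g^{-1} \xrightarrow{close} g^{-1}$. Applying Lemma \ref{02} twice (once on each side), we obtain $g * f * g^{-1} \xrightarrow{close} g * h' * g^{-1}$ rel $\dot{I}$, and $[g * h' * g^{-1}] = [g][h'][g]^{-1} \in H$ by normality of $H$, so $[g][f][g]^{-1} \in \pi_H^{qs}(X,x)$. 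For the ``in particular'' clause, take $H = \{[c_x]\}$, which is normal, and observe that $[g * h' * g^{-1}] = [c_x]$ whenever $[h'] = [c_x]$.

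The main potential obstacle is a bookkeeping issue rather than a conceptual one: Lemma \ref{02} supplies closeness to the \emph{same} concatenation $h_1' * h_2'$, not to an arbitrary representative of $[h_1][h_2]$, so the argument works because the definition of $\pi_H^{qs}(X,x)$ only requires closeness to \emph{some} representative of an element of $H$. In other words, the ``relative to $[h]$'' formulation from the notation preceding Definition \ref{def3.1n} is precisely what is needed to avoid the asymmetry noted in Remark \ref{000}(1), namely that closeness to one representative need not imply closeness to every representative.
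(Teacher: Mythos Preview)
Your proof is correct and follows essentially the same approach as the paper: both rely on Lemma \ref{02} to push closeness through concatenation, inversion, and conjugation. The only cosmetic difference is that the paper uses the one-step subgroup criterion (showing $[f_1][f_2]^{-1}\in\pi_H^{qs}(X,x)$ directly), whereas you verify identity, products, and inverses separately; the normality argument is identical.
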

\begin{proof}
First, assume that  $[f_1]$ and $[f_2]$ are arbitrary elements of $\pi_{H}^{qs}(X,x)$. Then $f_i \xrightarrow{close}h_i $ rel $\dot{I}$ for some $[h_i ]\in H$ for $i=1,2$.   Lemma \ref{02} implies that $f_1 *f_{2}^{-1}\xrightarrow{close}h_1 * h_{2}^{-1}$ rel $\dot{I}$. Since $H$ is a subgroup, $[h_1][h_2]^{-1}\in H$. Hence $[f_1][f_2]^{-1}\in \pi_{H}^{qs}(X,x)$. 
Now let $H$ be normal, and let $[f]\in \pi_H^{qs} (X,x)$ and $[g]\in \pi_1 (X,x)$ be arbitrary elements. By the definition of ?, $f\xrightarrow{close}h$ rel $\dot{I}$  for some $[h] \in H$. By applying Lemma \ref{02}, we have $g*f*g^{-1}\xrightarrow{close}g*h*g^{-1}$ rel $\dot{I}$, but $[g][h][g]^{-1} \in H$, since $H$ is normal. Hence $[g][f][g]^{-1}\in \pi_H^{qs} (X,x)$.
\end{proof}
By Definition \ref{def3.1n},  $H\subseteq \pi_H^{qs} (X,x)$, but the equality  does not hold in general. To see this, consider  $X$ as the wedge of  Harmonic archipelago and a line segment (Figure \ref{fig1}) for  which  $\{ e\} \neq \pi_1^{qs} (X,x)$ (see Remark \ref{rem3}). Moreover, the other elements of $\pi_H^{qs} (X,x)$ are generated by $H$ and $\pi_1^{qs}(X,x)$.

\begin{prop}\label{4444}
If $f\xrightarrow{close}g $ rel $\dot{I}$, then $[f*g^{-1}] \in \pi_1^{qs}(X,x)$. Moreover, for a subgroup $H$ of $\pi_1 (X,x)$
\[
\pi_{H}^{qs}(X,x )=H \pi_1^{qs}(X,x ).
\]
\end{prop}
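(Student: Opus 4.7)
The plan is to derive both assertions directly from Lemma \ref{02} together with the elementary observation that $h \xrightarrow{close} h$ rel $\dot{I}$ holds trivially for every path $h$ (take $\gamma = h$ in Definition \ref{ASL}). This triviality is crucial and is precisely what the author's emendation of Virk's definition (allowing $f \simeq g$ to count as closeness) makes available.

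For the first statement, I would combine $f \xrightarrow{close} g$ rel $\dot{I}$ with the tautological relation $g^{-1} \xrightarrow{close} g^{-1}$ rel $\dot{I}$ via the concatenation clause of Lemma \ref{02} (after noting that the endpoints match as required). This produces $f * g^{-1} \xrightarrow{close} g * g^{-1}$ rel $\dot{I}$. Since $g * g^{-1} \simeq c_x$ rel $\dot{I}$, the path $g * g^{-1}$ is an element of $[c_x]$, so by the notation introduced just before Definition \ref{def3.1n}, $f * g^{-1} \xrightarrow{close} [c_x]$ rel $\dot{I}$. By Definition \ref{def3.1n}, this is exactly the statement $[f * g^{-1}] \in \pi_1^{qs}(X, x)$.

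For the equality $\pi_H^{qs}(X, x) = H\, \pi_1^{qs}(X, x)$, I would prove both inclusions. For the inclusion $H\, \pi_1^{qs}(X,x) \subseteq \pi_H^{qs}(X, x)$, pick $[h] \in H$ and $[f] \in \pi_1^{qs}(X, x)$, so that $f \xrightarrow{close} f'$ rel $\dot{I}$ for some $f' \in [c_x]$. Applying Lemma \ref{02} to the pair $(h, h)$ and $(f, f')$ yields $h * f \xrightarrow{close} h * f'$ rel $\dot{I}$, and since $[h * f'] = [h][c_x] = [h] \in H$, we conclude $[h][f] \in \pi_H^{qs}(X, x)$. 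Conversely, if $[f] \in \pi_H^{qs}(X, x)$, pick $h'$ with $f \xrightarrow{close} h'$ rel $\dot{I}$ and $[h'] \in H$; the first part gives $[f * (h')^{-1}] \in \pi_1^{qs}(X, x)$, so $[f] = [f * (h')^{-1}] \cdot [h'] \in \pi_1^{qs}(X, x) \cdot H$. Since $\pi_1^{qs}(X, x)$ is normal in $\pi_1(X, x)$ by Theorem \ref{th3.2}, we have $\pi_1^{qs}(X, x) \cdot H = H \cdot \pi_1^{qs}(X, x)$, finishing the argument.

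I do not expect any real obstacle, only two small points to watch. First, one must verify the endpoint hypotheses of Lemma \ref{02} in each application (they hold because $f$ and $g$ share endpoints, as do $h$ and the constant loop at $x$). Second, the last step quietly uses the normality of $\pi_1^{qs}(X, x)$ to identify left and right cosets; this is why the proposition is stated with $H$ on the left without loss of generality.
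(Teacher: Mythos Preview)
Your proof is correct and follows essentially the same approach as the paper: both use Lemma \ref{02} to obtain $f*g^{-1}\xrightarrow{close}g*g^{-1}\in[c_x]$ for the first assertion, and both derive the reverse inclusion $\pi_H^{qs}(X,x)\subseteq H\pi_1^{qs}(X,x)$ from the first assertion together with the normality of $\pi_1^{qs}(X,x)$ established in Theorem \ref{th3.2}. The only cosmetic difference is that the paper obtains the forward inclusion by noting $H,\pi_1^{qs}(X,x)\subseteq\pi_H^{qs}(X,x)$ and invoking that $\pi_H^{qs}(X,x)$ is a subgroup (Theorem \ref{th3.2}), whereas you verify $[h][f]\in\pi_H^{qs}(X,x)$ directly via Lemma \ref{02}.
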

\begin{proof}
Let $f\xrightarrow{close}g $ rel $\dot{I}$. By Lemma \ref{02}, $f*g^{-1}\xrightarrow{close}g*g^{-1} $. Therefore, $[f*g^{-1}] \in \pi_1^{qs}(X,x)$, since $g*g^{-1} \in [c_x]$. Clearly, $\pi_1^{qs}(X,x )\subseteq \pi_{H}^{qs}(X,x )$. So we have $\langle \pi_1^{qs}(X,x )\cup H\rangle \subseteq \pi_{H}^{qs}(X,x )$. On the other hand, let $[f]\in \pi_{H}^{qs}(X,x )$. Hence, $f\xrightarrow{close}h$ for some $[h]\in H$. Then $[f*h^{-1}]\in \pi_1^{qs}(X,x )$, but we have $[f]=[f*h^{-1}][h]\in \langle \pi_1^{qs}(X,x)\cup H\rangle$. Accordingly, $\pi_{H}^{qs}(X,x )\subseteq \langle \pi_1^{qs}(X,x )\cup H\rangle$. Since $\pi_{1}^{qs}(X,x)$ is a normal subgroup of $\pi_1 (X,x)$, $H \pi_1^{qs}(X,x )$ is a subgroup of $\pi_1 (X,x)$. Therefore, $\pi_{H}^{qs}(X,x ) = \langle \pi_1^{qs}(X,x )\cup H\rangle =H \pi_1^{qs}(X,x )$. 
\end{proof}

\begin{corol}\label{co3.4}
Let $H$ be a subset of $\pi_1 (X,x)$ and let $K$ be a subgroup of $\pi_1 (X,x)$ containing $\pi_{1}^{qs}(X,x)$ (or $\pi_1^{sp} (X, x)$).  Then  $H\subseteq K$ if and only if $\pi_{H}^{qs}(X,x)\subseteq K$. In particular, $\pi_K^{qs} (X, x) =K$.
\end{corol}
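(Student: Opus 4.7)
The plan is to prove the two implications separately and then derive the ``in particular'' consequence by specializing.

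For the easier direction ($\pi_H^{qs}(X,x) \subseteq K \Rightarrow H \subseteq K$), I would first establish the general containment $H \subseteq \pi_H^{qs}(X,x)$. Given $[h] \in H$, taking $\gamma = h$ itself in Definition \ref{ASL} trivially satisfies every requirement: for any partition $0 = t_0 < \cdots < t_n = 1$ and open sets $U_i \supseteq h([t_{i-1},t_i])$, the path $h$ satisfies $h([t_{i-1},t_i]) \subset U_i$, $h(t_i) = h(t_i)$, and $h \simeq h$ rel $\dot I$. Hence $h \xrightarrow{close} h$ rel $\dot I$, so $[h] \in \pi_H^{qs}(X,x)$. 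The conclusion $H \subseteq K$ follows immediately.

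For the main direction ($H \subseteq K \Rightarrow \pi_H^{qs}(X,x) \subseteq K$), take $[f] \in \pi_H^{qs}(X,x)$. By Definition \ref{def3.1n} there exists $[h] \in H$ with $f \xrightarrow{close} h$ rel $\dot I$. I would then write
\[
[f] = [f * h^{-1}] \cdot [h]
\]
and verify that both factors belong to the subgroup $K$. The factor $[h]$ lies in $H \subseteq K$ by assumption. For $[f*h^{-1}]$, I split according to which of the two hypotheses on $K$ is in force: if $K \supseteq \pi_1^{qs}(X,x)$, the first assertion of Proposition \ref{4444} gives $[f*h^{-1}] \in \pi_1^{qs}(X,x) \subseteq K$; if instead $K \supseteq \pi_1^{sp}(X,x)$, Proposition \ref{SP} gives $[f*h^{-1}] \in \pi_1^{sp}(X,x) \subseteq K$. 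In either case the product $[f]$ lies in $K$ because $K$ is a subgroup.

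Finally, to deduce $\pi_K^{qs}(X,x) = K$, I would specialize the just-proved equivalence to $H = K$ (which is in particular a subset of $\pi_1(X,x)$). The trivial containment $K \subseteq K$ yields $\pi_K^{qs}(X,x) \subseteq K$, and combining with the reverse containment $K \subseteq \pi_K^{qs}(X,x)$ from the first paragraph gives equality. No step presents a genuine obstacle; the only point that merits explicit verification is the trivial but pivotal observation that $h \xrightarrow{close} h$ rel $\dot I$, since it is what makes $H \subseteq \pi_H^{qs}(X,x)$ hold for a mere subset $H$, without needing the group structure assumed in Proposition \ref{4444}.
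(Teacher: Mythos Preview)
Your proof is correct and follows essentially the same route as the paper: for the main direction you factor $[f]=[f*h^{-1}][h]$ and invoke Proposition~\ref{4444} (respectively Proposition~\ref{SP}) to place the first factor in $K$, exactly as the paper does. The only difference is cosmetic—you spell out the ``trivial'' direction via $h\xrightarrow{close}h$, whereas the paper simply asserts $H\subseteq\pi_H^{qs}(X,x)$ (a fact already noted after Theorem~\ref{th3.2}).
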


\begin{proof}
Trivially, if $\pi_{H}^{qs}(X,x)\subseteq K$, then $H\subseteq K$. To prove  the other direction, let $[f]\in \pi_{H}^{qs}(X,x)$. Then  $f\xrightarrow{close}h$ for some $[h]\in H$. By Proposition \ref{4444}, $[f][h]^{-1}\in \pi_{1}^{qs}(X,x)\subseteq K$ (by Proposition \ref{SP}, $[f][h]^{-1}\in \pi_{1}^{sp} (X,x)\subseteq K$). On other hand, by the hypothesis $H\subseteq K$, we have $[h]\in K$. Hence $[f]=[f][h]^{-1}[h]\in K$.
\end{proof}

For any subgroups $H$ and $K$ of $\pi_1 (X,x)$ with $\pi_{1}^{qs}(X,x) \subseteq H, K$, using Corollary \ref{co3.4}, we have
$H =K $ if and only if $ \pi_{H}^{qs}(X,x) = \pi_{K}^{qs}(X,x).$
 
\begin{corol}
Let $H \leq K\leq\pi_1 (X,x)$. If $\pi_H^{qs} (X, x) =H$, then $\pi_K^{qs} (X, x) =K$. 
\end{corol}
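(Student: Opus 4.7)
The plan is to deduce the containment $\pi_1^{qs}(X,x) \subseteq H$ from the hypothesis, and then leverage the fact that $H \leq K$ to promote this to $\pi_1^{qs}(X,x) \subseteq K$, at which point Corollary \ref{co3.4} (or equivalently Proposition \ref{4444}) immediately yields $\pi_K^{qs}(X,x) = K$.

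First I would apply Proposition \ref{4444} to the subgroup $H$, which gives the identity
\[
\pi_H^{qs}(X,x) = H \pi_1^{qs}(X,x).
\]
Combining this with the hypothesis $\pi_H^{qs}(X,x) = H$ yields $H \pi_1^{qs}(X,x) = H$, and hence every element of $\pi_1^{qs}(X,x)$ lies in $H$. Since by assumption $H \leq K$, I then conclude that $\pi_1^{qs}(X,x) \subseteq K$.

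With $K$ now a subgroup of $\pi_1(X,x)$ containing $\pi_1^{qs}(X,x)$, the hypothesis of the second clause of Corollary \ref{co3.4} is satisfied (taking $H := K$ there). Therefore $\pi_K^{qs}(X,x) = K$, as required. Alternatively, one may conclude this directly from Proposition \ref{4444}: $\pi_K^{qs}(X,x) = K \pi_1^{qs}(X,x) = K$, since $\pi_1^{qs}(X,x) \subseteq K$.

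There is no real obstacle here; the statement is essentially a formal consequence of the identity $\pi_H^{qs}(X,x) = H\pi_1^{qs}(X,x)$, which turns the equality $\pi_H^{qs}(X,x) = H$ into the absorption property $\pi_1^{qs}(X,x) \subseteq H$, a property that is obviously inherited by any larger subgroup $K$.
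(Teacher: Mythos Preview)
Your proof is correct and follows essentially the same approach as the paper: use Proposition~\ref{4444} to rewrite the hypothesis as $H\pi_1^{qs}(X,x)=H$, deduce $\pi_1^{qs}(X,x)\subseteq H\subseteq K$, and then invoke Corollary~\ref{co3.4} (or Proposition~\ref{4444} again) to conclude $\pi_K^{qs}(X,x)=K$.
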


\begin{proof}
Assume that $H \leq K\leq\pi_1 (X,x)$ and that $\pi_H^{qs} (X, x) =H$. Then   Proposition \ref{4444} implies that $ H \pi_{1}^{qs}(X,x) = \pi_H^{qs} (X, x) =H$. Hence 
$\pi_{1}^{qs}(X,x) \subseteq H$. It follows from  $H \leq K$ that $\pi_{1}^{qs}(X,x) \subseteq K$. Therefore, $\pi_K^{qs} (X, x) =K$ by Corollary \ref{co3.4}.
\end{proof}

The group $\pi_1^s (X, x)$ depends on the choice of base point, but in the following proposition, we show that $\pi_1^{qs} (X, x)$, as a generalization of $\pi_1^s(X, x)$, does not depend on the base point in a given path component.

\begin{prop}
Let $H$ be a subset of $\pi_1 (X,x)$, and let $\lambda :I\longrightarrow X$ be  a path from $x$ to $y$. Then $\pi_{H}^{qs}(X,x)\cong \pi_{\lambda^{-1}H\lambda}^{qs}(X,y)$. 
In particular, $\pi_1^{qs}(X,x)\cong \pi_1^{qs}(X,y)$ for all $x$ and $y$  belonging to a path component of $X$. 
\end{prop}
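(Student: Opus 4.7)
The plan is to use the classical change-of-basepoint isomorphism
$\varphi_{\lambda} : \pi_1(X, y) \to \pi_1(X, x)$ given by $\varphi_{\lambda}([f]) = [\lambda * f * \lambda^{-1}]$, and to verify that it restricts to a bijection between $\pi_{\lambda^{-1}H\lambda}^{qs}(X, y)$ and $\pi_H^{qs}(X, x)$. Since $\varphi_\lambda$ is already a group isomorphism, and since by Theorem \ref{th3.2} both of these subsets are subgroups whenever $H$ is, the claimed isomorphism of groups follows automatically once the two set-level inclusions are established.

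The key preliminary observation is that any path is homotopically close to itself: taking $\gamma = \lambda$ in Definition \ref{ASL} trivially verifies $\lambda \xrightarrow{close} \lambda$ rel $\dot{I}$, and likewise $\lambda^{-1} \xrightarrow{close} \lambda^{-1}$ rel $\dot{I}$. Now take $[f] \in \pi_{\lambda^{-1}H\lambda}^{qs}(X, y)$, so that $f \xrightarrow{close} h'$ rel $\dot{I}$ for some loop $h'$ at $y$ with $[h'] = [\lambda^{-1} * h * \lambda]$ and $[h] \in H$. Two successive applications of Lemma \ref{02} (first concatenating with $\lambda$ on the left, then with $\lambda^{-1}$ on the right) yield
\[
\lambda * f * \lambda^{-1} \xrightarrow{close} \lambda * h' * \lambda^{-1} \quad \text{rel } \dot{I},
\]
and the target satisfies $[\lambda * h' * \lambda^{-1}] = [\lambda * \lambda^{-1} * h * \lambda * \lambda^{-1}] = [h] \in H$. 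Hence $\varphi_\lambda([f]) \in \pi_H^{qs}(X, x)$.

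The reverse inclusion is entirely symmetric: given $[g] \in \pi_H^{qs}(X, x)$ with $g \xrightarrow{close} h$ rel $\dot{I}$ for some $[h] \in H$, the same two applications of Lemma \ref{02}, now sandwiching by $\lambda^{-1}$ on the left and $\lambda$ on the right, give $\lambda^{-1} * g * \lambda \xrightarrow{close} \lambda^{-1} * h * \lambda$ rel $\dot{I}$, and since $[\lambda^{-1} * h * \lambda] \in \lambda^{-1} H \lambda$, we conclude $\varphi_\lambda^{-1}([g]) = [\lambda^{-1} * g * \lambda] \in \pi_{\lambda^{-1}H\lambda}^{qs}(X, y)$. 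The two inclusions together show that $\varphi_\lambda$ restricts to the desired bijection, and specializing to $H = \{[c_x]\}$ gives $\lambda^{-1} \cdot 1 \cdot \lambda = \{[c_y]\}$, hence $\pi_1^{qs}(X, x) \cong \pi_1^{qs}(X, y)$. The only point requiring care is pure bookkeeping: Lemma \ref{02} is stated for a single concatenation, so each triple concatenation must be handled by two successive applications with attention to associativity, but no deeper obstacle arises.
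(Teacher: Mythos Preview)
Your proof is correct and follows essentially the same approach as the paper: both use the standard change-of-basepoint isomorphism and verify via Lemma \ref{02} (together with the trivial observation that any path is close to itself) that it carries $\pi_H^{qs}(X,x)$ onto $\pi_{\lambda^{-1}H\lambda}^{qs}(X,y)$. The paper works with the inverse map $\beta_\lambda([f])=[\lambda^{-1}*f*\lambda]$ and checks only one direction, leaving bijectivity implicit; your version is slightly more explicit in checking both inclusions and in noting that two applications of Lemma \ref{02} are needed for the triple concatenation.
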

\begin{proof}
Suppose $[f]\in \pi_{H}^{qs}(X,x )$. By Definition \ref{def3.1n}, $f\xrightarrow{close}h$ rel $\dot{I}$ for some $[h]\in H$.  By applying  Lemma \ref{02}, we have $\lambda^{-1}f\lambda \xrightarrow{close}\lambda^{-1}h\lambda$, where $ [\lambda^{-1}h\lambda]\in \lambda^{-1}H\lambda$. Therefore, $[\lambda^{-1}f\lambda]\in \pi_{\lambda^{-1}H\lambda}^{qs}(X,y)$. Then  we can define $\beta_{\lambda}:\pi_{H}^{qs}(X,x )\longrightarrow \pi_{\lambda^{-1}H\lambda}^{qs}(X,y)$ by $\beta_{\lambda}([f])=[\lambda^{-1}f\lambda ]$,  which is a group isomorphism. 
\end{proof}

The following proposition implies that $\pi_1^{qs}$ can be considered as a functor from $hTop_*$, the category of pointed topological spaces, to $Groups$, the category of groups.

\begin{prop}
Let $\phi :(X,x )\to (Y,y)$ be a continuous map and let $H\subseteq \pi_1 (X,x)$. Then $\phi_* :\pi_{H}^{qs}(X,x)\to \pi_{\phi_* (H)}^{qs}(Y,y)$ defined by $\phi_* ([f])=[\phi f]$ is a homomorphism.
\end{prop}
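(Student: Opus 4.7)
The plan is to verify two things in sequence: first that $\phi_*$ is well defined as a set map into $\pi_{\phi_*(H)}^{qs}(Y,y)$, and then that it respects the group operation. The second point is essentially the functoriality of $\pi_1$ and should reduce to the identity $\phi(f * g) = (\phi f) * (\phi g)$ together with the fact that homotopies rel $\dot{I}$ compose with $\phi$ to give homotopies rel $\dot{I}$. So the only content lies in the well-definedness.

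For well-definedness, I would start by observing that if $[f] = [f']$ in $\pi_1(X,x)$, then $\phi f \simeq \phi f'$ rel $\dot{I}$, so $[\phi f] = [\phi f']$; this takes care of independence of representative. The substantive step is showing that $[\phi f]$ actually lies in $\pi_{\phi_*(H)}^{qs}(Y,y)$ whenever $[f] \in \pi_H^{qs}(X,x)$. By Definition \ref{def3.1n}, there exists $[h] \in H$ and $h' \in [h]$ with $f \xrightarrow{close} h'$ rel $\dot{I}$. Applying Proposition \ref{pr2.3} to the continuous map $\phi$, I get $\phi f \xrightarrow{close} \phi h'$ rel $\dot{I}$. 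Since $\phi h' \simeq \phi h$ rel $\dot{I}$, the class $[\phi h'] = [\phi h] = \phi_*([h])$ belongs to $\phi_*(H)$, so by the notation convention $\phi f \xrightarrow{close} [\phi_*(h)]$ rel $\dot{I}$, which means $[\phi f] \in \pi_{\phi_*(H)}^{qs}(Y,y)$, as required.

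For the homomorphism property, I would simply compute
\[
\phi_*([f][g]) = \phi_*([f * g]) = [\phi(f*g)] = [(\phi f) * (\phi g)] = [\phi f][\phi g] = \phi_*([f])\phi_*([g]),
\]
using that $\phi$ commutes with concatenation of paths in the obvious way.

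There is no real obstacle here; the entire content of the proposition is that homotopical closeness is preserved under continuous maps (Proposition \ref{pr2.3}), which is why the $H$-quasi-small loop group behaves functorially. The condition $\gamma(t_i) = g(t_i)$ built into Definition \ref{ASL} is what makes Proposition \ref{pr2.3} work, so it implicitly underlies the functoriality asserted here.
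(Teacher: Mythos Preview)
Your proof is correct and follows essentially the same approach as the paper: show that $[f]\in\pi_H^{qs}(X,x)$ implies $[\phi f]\in\pi_{\phi_*(H)}^{qs}(Y,y)$ by invoking Proposition~\ref{pr2.3}, and then note that the homomorphism property is routine. Your version is slightly more explicit (you separate out independence of representative and distinguish $h'$ from $h$), but the substance is identical.
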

\begin{proof}
Let $[f]\in \pi_{H}^{qs}(X,x)$. Then $f\xrightarrow{close}h\; rel\; \dot{I}$ for some $[h]\in H$.  By Proposition \ref{pr2.3}, $\phi f\xrightarrow{close}\phi h\; rel\; \dot{I}$. This means that  $[\phi f]\in \pi_{\phi_* (H)}^{qs}(Y,y)$. One can easily check that $\phi_*$ is a homomorphism.
\end{proof}

\begin{remark}
As mentioned in Section 1,  $\pi_1^s(X, x)$ may admit different structures at different points. In order to have a subgroup not depending on the base point, Virk \cite{3} introduced   $\pi^{sg}_1 (X, x)$, as the subgroup generated by the set
$$
\{ [\beta *\alpha *\beta^{-1}]\; |\; \beta \in P(X,x), \; [\alpha ]\in \pi_1^s (X,\beta (1)) \} , 
$$
where $P(X, x)$ is the space of all paths in $X$ with initial point $x$. In general, $\pi_1^s (X,x)\subseteq \pi_{1}^{sg}(X,x)$, however, there exist spaces, namely the Harmonic archipelago $HA$ for which $\pi_1^s (HA,x)$ is trivial but $\pi_{1}^{sg}(HA, x)$ is not, whenever $x$ is any point except the origin. In the same manner,  we can study $\pi_{H}^{qsg}(X,x )$ as the subgroup generated by the following set:
$$
\{ [\beta *\alpha *\beta^{-1}]\; |\; \beta \in P(X,x), \; [\alpha ]\in \pi_{\beta^{-1}H\beta}^{qs}(X,\beta (1) ) \} .
$$
We show that  $\pi_{H}^{qs}(X,x )=\pi_{H}^{qsg}(X,x )$. Clearly $\pi_{H}^{qs}(X,x )\subseteq \pi_{H}^{qsg}(X,x )$.  Assume that $[\beta *\alpha *\beta^{-1}]$ is an arbitrary generator of $\pi_{H}^{qsg}(X,x )$, where $\beta \in P(X,x )$ and $[\alpha ]\in \pi_{\beta^{-1}H\beta }^{qs}(X,\beta (1) )$. Since $[\alpha ]\in \pi_{\beta^{-1}H\beta}^{qs}(X,x )$, it follows that $\alpha \xrightarrow{close}\beta^{-1}h\beta$ for some $[h]\in H$. Then $\beta *\alpha *\beta^{-1}\xrightarrow{close}\beta *\beta^{-1} *h*\beta *\beta^{-1}$, where $[\beta *\beta^{-1} *h*\beta *\beta^{-1}]=[h]\in H$. So $[\beta *\alpha *\beta^{-1}]\in \pi_{H}^{qs}(X,x )$. Thus $\pi_{H}^{qsg}(X,x )\subseteq \pi_{H}^{qs}(X,x )$.  
\end{remark}

\begin{remark}\label{rem3}
The inclusion  $\pi_1^{s}(X,x)\leqslant \pi_1^{qs}(X,x)$ holds trivially, however, there exist spaces $X$ for which $\pi_1^s (X,x)=1$ while $\pi_1^{qs} (X,x)\neq 1$. Take $X$ as the wedge of  Harmonic archipelago and a line segment (Figure \ref{fig1}). At point $x$, there is no non-trivial small loop,  and hence $\pi_1^s(X, x) = 1$, but  $f*g*f^{-1} \xrightarrow{close} f*f^{-1} \simeq c_x$. Thus  $[f*g*f^{-1}]\in \pi_1^{qs} (X,x)$ and moreover $[f*g*f^{-1}] \neq 1$. Therefore $\pi_1^s(X, x) = 1$ and $\pi_1^{qs}(X, x) \neq 1$.
\end{remark}
\begin{figure}[tb]
\centering
\includegraphics[scale=0.4]{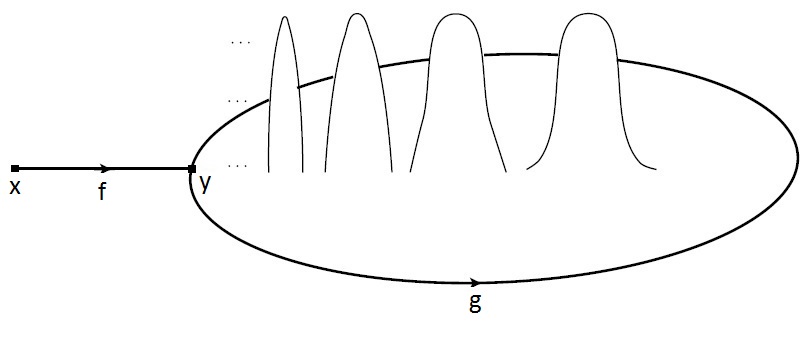}
\caption{The wedge of  Harmonic archipelago and line segment}\label{fig1}
\end{figure}

Proposition \ref{SP} presents a relationship between homotopical closeness and the Spanier subgroup, which helps us to find the location
of  subgroup $\pi_1^{qs} (x,x)$ in the chain $\pi_{1}^{s}(X,x)\leqslant \pi_{1}^{sg}(X,x)\leqslant \pi_{1}^{sp}(X,x)$ as follows. 
\begin{prop}\label{relation}
For a topological  space $X$ and any $x\in X$, 
\begin{equation}\label{eqmain}
 \pi_{1}^{sg}(X,x )\leqslant \pi_1^{qs}(X,x )\leqslant \pi_{1}^{sp}(X,x ).
\end{equation}
\end{prop}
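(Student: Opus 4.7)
The plan is to establish the two inclusions separately, each exploiting the machinery developed earlier in Section 2. The right inclusion $\pi_1^{qs}(X,x)\leqslant \pi_1^{sp}(X,x)$ is essentially immediate from Proposition \ref{SP}. Given $[f]\in \pi_1^{qs}(X,x)$, by definition there exists $f'\in [c_x]$ with $f\xrightarrow{close} f'$ rel $\dot{I}$. Proposition \ref{SP} gives $[f*f'^{-1}]\in \pi_1^{sp}(X,x)$. Since $f'\simeq c_x$ rel $\dot{I}$, we have $[f*f'^{-1}]=[f]$ in $\pi_1(X,x)$, so $[f]\in \pi_1^{sp}(X,x)$.

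For the left inclusion $\pi_1^{sg}(X,x)\leqslant \pi_1^{qs}(X,x)$, since $\pi_1^{qs}(X,x)$ is a subgroup by Theorem \ref{th3.2}, it suffices to show every generator of $\pi_1^{sg}(X,x)$ lies in $\pi_1^{qs}(X,x)$. Fix a generator $[\beta*\alpha*\beta^{-1}]$ with $\beta\in P(X,x)$ and $\alpha\xrightarrow{close} c_{\beta(1)}$ rel $\dot{I}$. The key observation, already noted in the discussion after Definition \ref{ASL}, is that homotopy implies homotopical closeness; in particular $\beta\xrightarrow{close}\beta$ rel $\dot{I}$ and $\beta^{-1}\xrightarrow{close}\beta^{-1}$ rel $\dot{I}$. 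Applying Lemma \ref{02} twice (concatenating on the left and right), we obtain
\[
\beta*\alpha*\beta^{-1} \xrightarrow{close} \beta*c_{\beta(1)}*\beta^{-1} \text{ rel } \dot{I}.
\]
Since $\beta*c_{\beta(1)}*\beta^{-1}\simeq c_x$ rel $\dot{I}$, i.e.\ $\beta*c_{\beta(1)}*\beta^{-1}\in [c_x]$, Definition \ref{def1} yields $[\beta*\alpha*\beta^{-1}]\in \pi_1^{qs}(X,x)$, as required.

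There is no real obstacle here; the proof is a direct assembly of Lemma \ref{02}, Proposition \ref{SP}, and the convention that homotopic paths are close. The only subtlety worth flagging is the need to remember that Definition \ref{ASL} diverges from Virk's original Definition 58 precisely in allowing homotopic paths to be close; this convention is what makes $\beta\xrightarrow{close}\beta$ available for the concatenation argument in the first inclusion. Everything else is bookkeeping.
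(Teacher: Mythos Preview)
Your proof is correct and follows essentially the same route as the paper's: both inclusions are handled exactly as you describe, via Lemma \ref{02} (for $\pi_1^{sg}\leqslant\pi_1^{qs}$) and Proposition \ref{SP} (for $\pi_1^{qs}\leqslant\pi_1^{sp}$). Your version is slightly more explicit in invoking Theorem \ref{th3.2} to reduce to generators and in spelling out that $\beta\xrightarrow{close}\beta$ is needed for the concatenation, but the argument is the same.
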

\begin{proof}
First, assume that $[\alpha *\beta *\alpha^{-1}]$ is an arbitrary generator of $\pi_{1}^{sg}(X,x )$, where $\alpha \in P(X,x )$ and $[\beta ]\in \pi_{1}^{s}(X,\alpha (1))$. Then  $\beta \xrightarrow{close}c_{\alpha (1)}\; rel\; \dot{I}$, which implies that  $\alpha *\beta *\alpha^{-1}\xrightarrow{close}\alpha *c_{\alpha (1)}*\alpha^{-1}\; rel\; \dot{I}$. Since  $\alpha *c_{\alpha (1)}*\alpha^{-1}\in [c_{x}]$, so  $[\alpha *\beta *\alpha^{-1}]\in \pi_1^{qs}(X,x )$. 

Now, assume that $[f]\in \pi_1^{qs}(X,x )$. Then $f\xrightarrow{close}g$ for some $g\in [c_{x}]$. By Proposition \ref{SP}, $[f]=[f*g^{-1}]\in \pi_{1}^{sp}(X,x )$. Thus $\pi_1^{qs}(X,x )\leqslant \pi_{1}^{sp}(X,x )$.
\end{proof}

\begin{remark}\label{rem1}
\begin{enumerate}
\item
Consider the space $Z$ in Remark \ref{000} (Figure \ref{fig2}).  Clearly $f\xrightarrow{close}f' $ rel $\dot{I}$ and $f'\in [c_x]$.
 Then $1\neq [f]\in \pi_1^{qs} (Z,x)$ while $\pi_1^{sg}(Z,x)=1$, because $Z$ is semilocally simply connected at any point. This shows that the inclusion $ \pi_{1}^{sg}(X,x )\leqslant \pi_1^{qs}(X,x )$ can be strict. 
\item
Consider  the space $Y'$ defined by Fischer et al. \cite{On}.  By \cite[Proposition 3.2 and Theorem 3.7]{On},  $Y'$ is homotopically path Hausdorff and then $\pi_{1}^{sp}(Y', x)\neq 1$, but  by Theorem \ref{s}, $\pi_1^{qs} (Y',x)=1$. Putting $X = Y'$ implies that the inclusion $\pi_1^{qs}(X,x )\leqslant \pi_{1}^{sp}(X,x )$ can be strict.  
\end{enumerate}
\end{remark}

It is well known that a space $X$ is homotopically Hausdorff if and only if $\pi_1^s(X, x) =1$ for all $x \in X$. Due to  the fact that the notion of closeness is a generalization of the smallness, Virk generalized the above statement in \cite[Proposition 48]{2} as follows: A locally path connected metric space $X$ is homotopically path Hausdorff if and only if there are no (non-homotopic) paths $f,g:I \to X$ with $f \xrightarrow{close} g$. For a topological space $X$, if there are non-homotopic paths $f,g:I \to X$ with $f \xrightarrow{close} g$, then by Proposition \ref{4444}, $[f*g^{-1}] \in \pi_{1}^{qs}(X,x)$, which implies that $ \pi_{1}^{qs}(X,x) \neq 1$. If $ \pi_{1}^{qs}(X,x) \neq 1$, then there is a non-null-homotopic loop $f$ in $X$ such that $[f] \in \pi_{1}^{qs}(X,x)$. So  $f\xrightarrow{close}g$ for some $g\in [c_{x}]$. Hence, for a topological space $X$, there are no non-homotopic paths $f,g:I \to X$ with $f \xrightarrow{close} g$ if and only if $ \pi_{1}^{qs}(X,x) = 1$. Since, for non-homotopic paths, Definition \ref{ASL} and Definition 58 in \cite{2} coincide on locally path connected spaces, it follows from \cite[Proposition 48]{2} that a locally path connected metric space $X$ is homotopically path Hausdorff if and only if $ \pi_{1}^{qs}(X,x) = 1$. In the following theorem, we prove a more general version of the above fact, using the quasi-small loop group.

\begin{theorem}\label{thm100}
Let $X$ be a topological space  and let $x\in X$. 

\begin{enumerate}
\item
If $\pi_1^{qs} (X,x)=1$, then $X$ is homotopically path Hausdorff;
\item
If $X$ is homotopically path Hausdorff with respect to a subgroup $H  \leq \pi_1(X,x)$, then $\pi_{H}^{qs} (X,x)=H$. Moreover, if $X$ is homotopically path Hausdorff, then $\pi_{H}^{qs} (X,x)=H$ for every $H \leq \pi_1(X,x)$.
\end{enumerate}

\end{theorem}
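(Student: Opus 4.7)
I handle the two items separately. For (1), I prove the contrapositive: if $X$ fails to be homotopically path Hausdorff, the witnessing pair of paths satisfies exactly the closeness relation of Definition \ref{ASL}, and Lemma \ref{02} then lifts this to a nontrivial class in $\pi_1^{qs}(X,x)$. For (2), I take an arbitrary $[f]\in\pi_H^{qs}(X,x)$ with partner loop $h$ satisfying $[h]\in H$ and $f\xrightarrow{close}h$ rel $\dot{I}$, and then apply Definition \ref{def10}(2) to the pair $(h,f)$ to force $[h*f^{-1}]\in H$, whence $[f]\in H$. The moreover then reduces to the $H=1$ case of (2), which combined with (1) gives the biconditional $\pi_1^{qs}(X,x)=1\iff X$ is homotopically path Hausdorff, after which Proposition \ref{4444} upgrades the conclusion to arbitrary subgroups.

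\textbf{Execution of (1).} Assume $X$ is not homotopically path Hausdorff. Negating Definition \ref{def10}(3), there exist paths $\alpha,\beta\in P(X,x)$ with $\alpha(1)=\beta(1)$ and $[\alpha*\beta^{-1}]\neq 1$ such that for every partition $0=t_0<\cdots<t_n=1$ and every sequence of open sets $U_1,\ldots,U_n$ with $\alpha([t_{i-1},t_i])\subset U_i$, some path $\gamma$ satisfies $\gamma([t_{i-1},t_i])\subset U_i$, $\gamma(t_i)=\alpha(t_i)$, and $[\gamma*\beta^{-1}]=1$, that is, $\gamma\simeq\beta$ rel $\dot{I}$. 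This is exactly the relation $\beta\xrightarrow{close}\alpha$ rel $\dot{I}$ of Definition \ref{ASL}. Combined with $\alpha^{-1}\xrightarrow{close}\alpha^{-1}$ rel $\dot{I}$ (trivial, take $\gamma=\alpha^{-1}$), Lemma \ref{02} yields $\beta*\alpha^{-1}\xrightarrow{close}\alpha*\alpha^{-1}$ rel $\dot{I}$. Since $\alpha*\alpha^{-1}\in[c_x]$, the class $[\beta*\alpha^{-1}]=[\alpha*\beta^{-1}]^{-1}$ lies in $\pi_1^{qs}(X,x)$, and it is nontrivial.

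\textbf{Execution of (2), the moreover, and the main obstacle.} Now assume $X$ is homotopically path Hausdorff relative to the subgroup $H$. The inclusion $H\subseteq\pi_H^{qs}(X,x)$ is immediate from Definition \ref{def3.1n}. For the reverse, let $[f]\in\pi_H^{qs}(X,x)$ and pick $h$ with $[h]\in H$ and $f\xrightarrow{close}h$ rel $\dot{I}$. Suppose, toward a contradiction, that $[h*f^{-1}]\notin H$. Applying Definition \ref{def10}(2) with $\alpha=h$ and $\beta=f$ produces a partition and a sequence of open sets covering $h$ such that every compliant $\gamma$ has $[\gamma*f^{-1}]\notin H$; but closeness $f\xrightarrow{close}h$ supplies such a $\gamma$ additionally satisfying $\gamma\simeq f$ rel $\dot{I}$, giving $[\gamma*f^{-1}]=1\in H$, a contradiction. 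Hence $[h*f^{-1}]\in H$, and since $[h]\in H$, the subgroup property gives $[f]=[h*f^{-1}]^{-1}[h]\in H$. Specializing $H=1$ and pairing with (1) yields the biconditional that $X$ is homotopically path Hausdorff iff $\pi_1^{qs}(X,x)=1$; then Proposition \ref{4444} turns this into $\pi_H^{qs}(X,x)=H\,\pi_1^{qs}(X,x)=H$ for every subgroup $H\leq\pi_1(X,x)$. The only point requiring care is the asymmetry of closeness (Remark \ref{000}(3)): in (1) the given $\beta$ must play the role of the source of the closeness relation, and in (2) the loop $h$ must sit in the $\alpha$-slot of Definition \ref{def10}(2); swapping these roles would invalidate both arguments.
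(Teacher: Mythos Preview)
Your proof is correct and follows essentially the same route as the paper's. The only cosmetic differences are that in (1) you conclude $\beta*\alpha^{-1}\xrightarrow{close}\alpha*\alpha^{-1}$ while the paper writes $\alpha*\beta^{-1}\xrightarrow{close}\alpha*\alpha^{-1}$ (using the inverse clause of Lemma~\ref{02} on the other factor), and for the ``moreover'' you invoke Proposition~\ref{4444} directly whereas the paper cites its immediate consequence Corollary~\ref{co3.4}; neither difference is substantive.
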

\begin{proof}
$1$. Suppose that $X$ is not homotopically path Hausdorff. Then by Definition \ref{def10}, there exist paths $\alpha, \beta \in P(X, x)$ with  $\alpha (1)=\beta (1)$ and $[\alpha] \neq [\beta]$ such that  for every  partition $0 = t_0 < t_1 < t_2 < \cdots < t_n = 1$ and every sequence of open sets $U_1,U_2, \ldots ,U_n$ with $\alpha ([t_{i-1}, t_i ]) \subset U_i$, there exists a path  $\gamma : [0, 1] \to 
X$  satisfying $\gamma ([t_{i-1}, t_i ]) \subset U_i$ for $1 \leq  i \leq  n$ and $\gamma (t_i ) = \alpha (t_i )$ for
$0 \leq i \leq n$ such that  $[\gamma ] = [ \beta]$. By Definition \ref{ASL}, $\beta \xrightarrow{close}\alpha$  rel  $\dot{I}$. Now by Lemma \ref{02}, $\alpha *\beta^{-1}\xrightarrow{close} \alpha*\alpha^{-1}$  rel  $\dot{I}$. This means that $[\alpha *\beta^{-1}]\in \pi_{1}^{qs}(X,x)$. Therefore $\pi_{1}^{qs}(X,x) \neq 1$ since $[\alpha] \neq [\beta]$.

$2$. Suppose that  $X$ is  homotopically path Hausdorff relative to $H$. By contrary, assume that there exists $[f]\in \pi_H^{qs} (X,x)\setminus H$. Then $f\xrightarrow{close}h$ rel $\dot{I}$ for some $[h]\in H$. Since $[f]\not \in H$,  $[h*f^{-1}]\not \in H$. Now by the hypothesis,   there are a partition $0 = t_0 < t_1 < t_2 < \cdots < t_n = 1$ and a sequence of open sets $U_1,U_2, \ldots ,U_n$ with $h ([t_{i-1}, t_i ]) \subset U_i$, such that for any path  $\gamma : [0, 1] \to  X$  satisfying $\gamma ([t_{i-1}, t_i ]) \subset U_i$ for $1 \leq  i \leq  n$ and $\gamma (t_i ) = h (t_i )$ for $0 \leq i \leq n$, we have $[\gamma * f^{-1}] \not \in H$. This implies that $\gamma \not \simeq f$  rel  $\dot{I}$,  which contradicts with the fact that $f\xrightarrow{close}h$ rel $\dot{I}$. Thus $\pi_{H}^{qs} (X,x)=H$. Now let $X$ be homotopically path Hausdorff. Then $\pi_{1}^{qs} (X,x)=1$, which implies that $\pi_{H}^{qs} (X,x)=H$ for every $H \leq \pi_1(X,x)$ by Corollary \ref{co3.4}.
\end{proof}

The next theorem presents several equivalent conditions for the property of  homotopically path Hausdorffness. 
Theorem \ref{s} part (1)$\implies$(2) is obtained by \cite[Proposition 48]{2}  for locally path connected metric  spaces. Moreover, Virk presented the example $C(\mathbb{S}^1, \{0\})$ to show that the property of  locally path connectedness cannot be omitted \cite[p. 370]{2}. In the following, by adding the condition $\gamma(t_i) = g(t_i)$ in Definition \ref{ASL}, we see that Proposition 48 in \cite{2} holds for any arbitrary spaces.

\begin{theorem}\label{s}
Let $X$ be a topological space  and let $x\in X$. Then the following statements are equivalent:
\begin{enumerate}
\item
$X$ is homotopically path Hausdorff;
\item
$\pi_1^{qs} (X,x)=1$;
\item
 for all $[f]\in \pi_1 (X,x)$, if $f\xrightarrow{close}g$ rel $\dot{I}$, then $[f]=[g]$;
\item
 for all $[f]\in \pi_1 (X,x)$, if $f\xrightarrow{close} g_1$ rel $\dot{I}$ and $f\xrightarrow{close} g_2$ rel $\dot{I}$, then $[g_1 ]=[g_2 ]$.
\end{enumerate}
\end{theorem}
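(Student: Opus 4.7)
The plan is to combine Theorem \ref{thm100} with a short cycle of implications among the algebraic statements. The equivalence of (1) and (2) is already contained in Theorem \ref{thm100}: part (1) of that theorem gives (2)$\Rightarrow$(1), while applying part (2) with the trivial subgroup $H = 1$ gives (1)$\Rightarrow$(2). So it remains to establish (2)$\Rightarrow$(3)$\Rightarrow$(4)$\Rightarrow$(1), closing the cycle.

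For (2)$\Rightarrow$(3), I would suppose $f \xrightarrow{close} g$ rel $\dot{I}$ with $[f] \in \pi_1(X,x)$; then $g$ is also a loop at $x$, and Proposition \ref{4444} yields $[f * g^{-1}] \in \pi_1^{qs}(X,x)$. Since by hypothesis $\pi_1^{qs}(X,x) = 1$, this forces $[f] = [g]$. The step (3)$\Rightarrow$(4) is immediate: applying (3) separately to $f \xrightarrow{close} g_1$ and to $f \xrightarrow{close} g_2$ gives $[g_1] = [f] = [g_2]$.

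The substantive step is (4)$\Rightarrow$(1), which I will prove by contrapositive. If $X$ is not homotopically path Hausdorff then, by negating Definition \ref{def10}(3), there exist paths $\alpha, \beta \in P(X, x)$ with $\alpha(1) = \beta(1)$ and $[\alpha] \neq [\beta]$ such that for every partition $0 = t_0 < \cdots < t_n = 1$ and every sequence of open sets $U_1, \ldots, U_n$ with $\alpha([t_{i-1}, t_i]) \subset U_i$, one can find a path $\gamma$ refining the cover with $\gamma(t_i) = \alpha(t_i)$ and $[\gamma] = [\beta]$. By Definition \ref{ASL}, this is precisely $\beta \xrightarrow{close} \alpha$ rel $\dot{I}$.

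To contradict (4), which is stated for loops based at $x$, I pass to the loop $f = \beta * \alpha^{-1}$. Since $\alpha^{-1} \xrightarrow{close} \alpha^{-1}$ trivially (take $\gamma = \alpha^{-1}$ in Definition \ref{ASL}), Lemma \ref{02} together with $\beta \xrightarrow{close} \alpha$ yields $f = \beta * \alpha^{-1} \xrightarrow{close} \alpha * \alpha^{-1}$; moreover $f \xrightarrow{close} f$ holds trivially. Setting $g_1 = \beta * \alpha^{-1}$ and $g_2 = \alpha * \alpha^{-1}$, we have $[g_1] \neq [g_2] = [c_x]$ precisely because $[\alpha] \neq [\beta]$, contradicting (4). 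The main obstacle is exactly this step (4)$\Rightarrow$(1): turning a path-level failure of homotopical path Hausdorffness into a loop-level violation of (4). The concatenation trick with $\alpha^{-1}$, powered by Lemma \ref{02}, handles this cleanly and avoids any appeal to local path connectedness, which is why the same proof strategy extends the locally path connected metric result of \cite[Proposition 48]{2} to arbitrary spaces.
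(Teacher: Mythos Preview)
Your proof is correct and uses essentially the same ingredients as the paper's: Theorem \ref{thm100} for $(1)\Leftrightarrow(2)$, Proposition \ref{4444}/Lemma \ref{02} for the algebraic steps, and the trivial observation $f\xrightarrow{close}f$. The only difference is organizational: the paper closes the chain via the short direct implications $(4)\Rightarrow(3)$ (take $g_2=f$) and $(3)\Rightarrow(2)$, whereas you prove $(4)\Rightarrow(1)$ by contrapositive, in effect re-running the argument of Theorem \ref{thm100}(1) to land on a violation of $(4)$ rather than $(2)$.
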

\begin{proof}
$(1) \Leftrightarrow (2)$. It follows from Theorem \ref{thm100} for the trivial subgroup $H=1$ of $\pi_1 (X,x)$.

$(2) \Leftrightarrow (3)$. Assume that $\pi_1^{qs}(X,x)=1$ and $f\xrightarrow{close} g$ rel $\dot{I}$ for $[f]\in \pi_1 (X,x)$. Then by Lemma \ref{02}, $f*g^{-1}\xrightarrow{close}g*g^{-1}$ rel $\dot{I}$, where $g*g^{-1}\in [c_{x}]$. Hence $[f*g^{-1}]\in \pi_1^{qs}(X,x)=1$ and so $[f]=[g]$. Thus the condition $(3)$ holds.

Conversely, suppose that for all $[f]\in \pi_1 (X,x)$, if $f\xrightarrow{close}g$ rel $\dot{I}$, then $[f]=[g]$.  Assume that  $f\in \pi_1^{qs}(X,x)$. Then $f\xrightarrow{close}f'$ rel $\dot{I}$, for some $f' \in [c_{x}]$. Then by the hypothesis, we have $[f]=[f']=[c_{x}]$ and so, $\pi_1^{qs}(X,x)=1$.

$(3) \Leftrightarrow (4)$. Assume that for all $[f]\in \pi_1 (X,x)$, if $f\xrightarrow{close}g$ rel $\dot{I}$, then $[f]=[g]$. Also, assume that  $f\xrightarrow{close} g_1$ rel $\dot{I}$ and $f\xrightarrow{close} g_2$ rel $\dot{I}$ for $[f]\in \pi_1 (X,x)$. Then by the hypothesis,  $[g_1 ]=[f]=[g_2 ]$.

Conversely, assume that for all $[f]\in \pi_1 (X,x)$, the conditions   $f\xrightarrow{close} g_1$ rel $\dot{I}$ and $f\xrightarrow{close} g_2$ rel $\dot{I}$ imply  $[g_1 ]=[g_2 ]$.  Suppose that $f\xrightarrow{close} g$ rel $\dot{I}$ for some $[f]\in \pi_1 (X,x)$.  Since $f\xrightarrow{close} f$ rel $\dot{I}$, by the hypothesis  $[f]=[g]$. 
\end{proof}

By Theorem \ref{s}, we show the difference between Definition \ref{ASL} and Definition 58 in \cite{2} on non-locally path connected spaces.

\begin{example}\label{ex2.2}
Consider the space $C(\mathbb{S}^1 ,\{ 0\})$  (see Figure \ref{fig4}), constructed in \cite[p. 370]{2}, which  is recalled in Remark \ref{000}. The space $C(\mathbb{S}^1 ,\{ 0\})$ is  not locally path connected \cite[p. 370, Remark]{2}. Moreover, $C(\mathbb{S}^1, \{0\})$ is homotopically path Hausdorff  with loops that are close together \cite[p. 370]{2}, namely $f$ and $g$, in the sense of \cite[Definition 58]{2}. If $f$ and $g$ are homotopically close in the sense of Definition \ref{ASL}, then by Theorem \ref{s} (3), $[f] = [g]$ or equivalently $f \simeq g$ rel $\dot{I}$, which contradicts with \cite[Definition 58]{2}. Therefore $C(\mathbb{S}^1, \{0\})$ has no homotopically close loops in the sense of Definition \ref{ASL}, except homotopic ones.

\begin{figure}
\centering
\includegraphics[scale=.35]{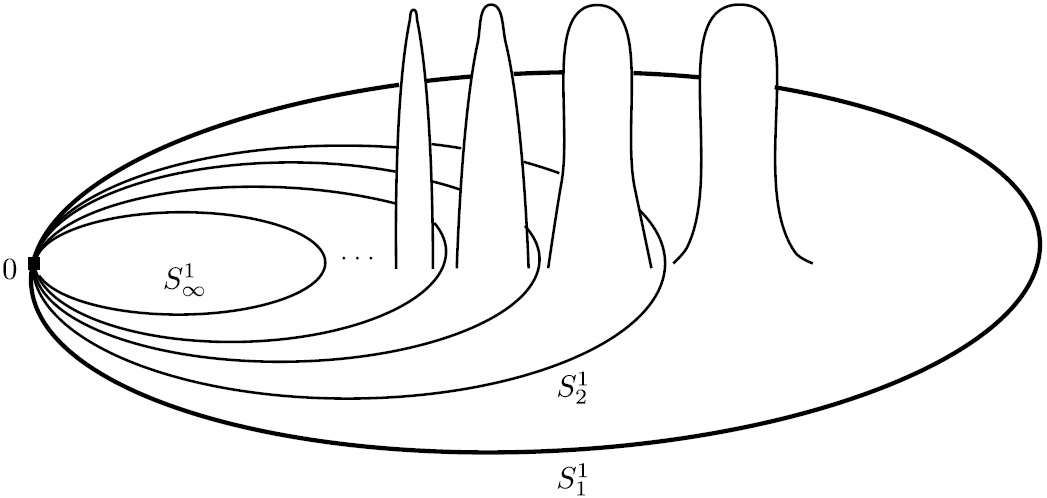}
\caption{The space $C(\mathbb{S}^1, \{0\})$}\label{fig4}
\end{figure} 
\end{example}


Theorem \ref{s} (1) $\Rightarrow$ (2) proves that the converse statement of part (1) of Theorem \ref{thm100} holds for every space $X$. Now there is a natural question whether the converse statement of part (2) of Theorem \ref{thm100} holds.

\begin{problem}\label{pr3.15n}
Under which conditions for a space $X$ and a subset $H$  of $\pi_1(X, x)$, if $\pi_H^{qs} (X, x) = H$, then we can conclude that  $X$ is homotopically path Hausdorff relative to $H$? 
\end{problem}

\section{Quasi-small loop group and quasi-topological fundamental group}

There are some relationships, indicated in Remark \ref{300}, between the quasi-small loop subgroup $\pi_H^{qs}(X, x)$ and the quasi-topological fundamental group $\pi_1^{qtop} (X, x)$ studied in Brazas-Fabel \cite{1} as the quotient of $P(X,x)$, the set of paths in $X$ starting at $x$ equipped with the compact open topology. Also, Brazas-Fabel \cite{1} proved that  for a locally path connected space $X$, the quasi-topological fundamental group $\pi_1^{qtop} (X, x)$ satisfies the $T_1$ separation axiom if and only if $X$ is homotopically path Hausdorff. Now, we find an equivalent condition for homotopical path Hausdorffness by the quasi-small loop group.

\begin{remark}\label{300}
Let $X$ be a locally path connected space and  let $H\subsetneqq \pi_1 (X, x)$. 
\begin{enumerate}
\item
If $H$ is closed in $\pi_{1}^{qtop}(X,x)$, then $\pi_H^{qs} (X,x)=H$.
By \cite[Lemma 9]{1}, $H$ is closed in $\pi_{1}^{qtop}(X,x)$ if and only if  $X$ is homotopically path Hausdorff relative to $H$. By Theorem \ref{thm100}, if $X$ is homotopically path Hausdorff relative $H$, then $\pi_{H}^{qs} (X,x)=H$. 

\item 
$\pi_{1}^{qtop}(X, x)$ is $T_1$ if and only if $\pi_1^{qs} (X, x)=1$.
By item (1) and Theorem \ref{thm100}, $\pi_1^{qs}(X, x) = 1$ if and only if the identity is closed in $\pi_1^{qtop}(X, x)$, which is equivalent to be $T_1$ for quasi-topological groups. 

\end{enumerate}

\end{remark}

\begin{figure}
\centering
\includegraphics[scale=0.6]{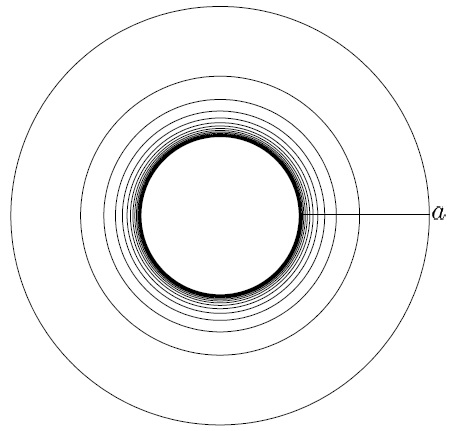}
\caption{The space $Y$}\label{fig3}
\end{figure}

\begin{remark}
The local path connectedness is an essential condition  in Remark \ref{300}. As an example for item (2), consider the space $X$ introduced in Example 2.9 of Torabi-Pakdaman-Mashayekhy \cite{31} as follows: Let $Y_1 = \{ (x, y) \in \mathbb{R}^2 \; |\; x^2 + y^2 = (1/2 + 1/n)^2,\; n \in \mathbb{N}\}$, let 
$Y_2 = \{ (x, y) \in \mathbb{R}^2 \; |\; x^2+y^2 = 1/4\}\cup \{ (x, 0) \in \mathbb{R}^2 \; |\;  1/2 \leq x \leq 3/2\}$, and let $Y = Y_1\cup Y_2$ with $x = (3/2, 0) = a$ as the base point (see Figure \ref{fig3}). Let $f_i : \mathbb{S}^1 \to S_i$ be a
homeomorphism from the 1-sphere into $Y$ such that $f_i ((1, 0)) = (1/2 + 1/i, 0)$,
where $S_i  = \{ (x, y) \in \mathbb{R}^2 \; |\;  x^2 + y^2 = (1/2 + 1/i)/2\}$, for every $i \in \mathbb{N}$. Put $X_0 = Y$,
and let $X_i = X_{i-1} \cup_{f_i} C_i$, where $C_i$ is a cone over $\mathbb{S}^1$ with height 1, be the
space obtained by attaching the cone $C_i$ to $X_{i-1}$ via $f_i$ for all $i \in \mathbb{N}$. Consider $X=\bigcup_{i\in \mathbb{N}}X_i$. Clearly, $X$ is path connected and semilocally simply connected, but it is not locally path connected. Then $\pi_1^{sp}(X,x)=1$. Hence $\pi_1^{qs}(X,x)=1$ by Proposition \ref{relation}. On the other hand, $\pi_1^{qtop}(X,x)$ is an indiscrete topological group by Remark 2.11 of Torabi-Pakdaman-Mashayekhy \cite{31}. Therefore $\pi_{1}^{qtop}(X,x)$ is not $T_1$.
\end{remark}

Homotopical Hausdorffness holds if and only if the path space is Hausdorff; see \cite{BroDyd}. For the relative cases, homotopicall Hausdorffness is equivalent to a certain quotient space be Hausdorff \cite{BabMas3, FisZas}.
Now because of the name of the property of homotopically path Hausdorff and other similar results, it is interesting to know if there is any topology on the fundamental group or on the path space, which is suitable as follows.

\begin{problem}
Is there any topology on the fundamental group or path space that satisfies the following equivalence? 
\begin{itemize}
\item
a space $X$ is homotopically path Hausdorff relative to $H$ if and only if either the quotient space $\frac{\pi_1(X, x)}{H}$ or a quotient of the path space is Hausdorff. 
\end{itemize}
\end{problem}

Also by Remark \ref{300}, we can restate Problem \ref{pr3.15n} as follows.

\textit{
Under which conditions for a space $X$ and subset $H$ of $\pi_1(X, x)$, if  $\pi_H^{qs} (X, x) = H$, then $H$ is closed in $\pi^{qtop}_1(X, x)$?}

\end{document}